\documentclass[12pt,twoside]{amsart}

\usepackage{amsmath,amsthm,amsmath,amsrefs}

\usepackage{calc}
\usepackage{setspace}
\usepackage{amsbsy,amsmath,amsfonts,amsthm,amssymb,color}
\usepackage[margin=1in]{geometry}
\newtheorem{innercustomtheorem}{Theorem}
\newenvironment{customtheorem}[1]
  {\renewcommand\theinnercustomtheorem{#1}\innercustomtheorem}
  {\endinnercustomtheorem}

\theoremstyle{plain}
\newtheorem{theorem}{Theorem}[section] %[section]
\newtheorem{lemma}[theorem]{Lemma}

\newtheorem{corollary}[theorem]{Corollary}
\newtheorem{remark}[theorem]{Remark}
\newtheorem{problem}[theorem]{Problem}
\newtheorem{proposition}[theorem]{Proposition}

\theoremstyle{definition}

\usepackage{mathrsfs}

\newcommand{\bN}{\mathbb{N}}
\newcommand{\cT}{\mathcal{T}}

\newcommand{\cS}{\mathcal{S}}
\newcommand{\cB}{\mathcal{B}}

\newcommand{\cK}{\mathcal{K}}

\newcommand{\la}{\langle}
\newcommand{\ra}{\rangle}

\newcommand{\cH}{\mathcal{H}}
\newcommand{\cA}{\mathcal{A}}
\newcommand{\er}{\text{er}}
\newcommand{\el}{\text{el}}

\newcommand{\bofh}{\cB(\cH)}

\newcommand{\bC}{\mathbb{C}}

\newcommand{\cR}{\mathcal{R}}
\newcommand{\id}{\text{id}}

\newcommand{\bZ}{\mathbb{Z}}
\newcommand{\cI}{\mathcal{I}}

\newcommand{\bF}{\mathbb{F}}
\newcommand{\cW}{\mathcal{W}}

\newcommand{\qofh}{\mathcal{Q}(\mathcal{H})}

\newcommand{\env}{\text{env}}
\newcommand{\cL}{\mathcal{L}}
\newcommand{\bofl}{\mathcal{B}(\mathcal{L})}
\newcommand{\ess}{\text{ess}}

\usepackage{tikz}
\usepackage{tikz-cd}

\title[Ubiquity of Counterexamples to the Smith-Ward problem]{Ubiquity of Counterexamples to the Smith-Ward problem}

\author{Samuel J. Harris}

\address{Northern Arizona University\\
Department of Mathematics \& Statistics\\
801 S. Osborne Dr.\\
Flagstaff, AZ\\
86011 USA}
\email{samuel.harris@nau.edu}

\begin{document}

\begin{abstract}
The Smith-Ward problem about matrix ranges, posed in the 1980s, was recently resolved in the negative by Marcel Scherer \cite{Sch26} by obtaining a three-dimensional operator system $\cS \subseteq M_4(C_r^*(\bF_2))$ without the lifting property. However, this operator system must be exact. In this paper we show that, for every finitely generated $C^*$-algebra $\cA$ without the local lifting property (LLP), there exists a three-dimensional operator system $\cS \subseteq M_{n+2}(\cA)$ without the lifting property (LP), thus generalizing Scherer's result and eliminating the reliance on $\text{Ext}(\cA)$ not being a group. In particular, we prove that whenever $\cT$ is a finite-dimensional operator system without the LP, then $M_{n+2}(C_u^*(\cT))$ contains a $3$-dimensional operator system without the LP for some $n \leq 2(\dim(\cT)-1)$. In this way, we yield a plethora of counterexamples to the Smith-Ward problem. In particular, unlike Scherer's example, these three-dimensional operator systems fail both the LP and exactness. We also prove the existence of a three-dimensional operator system that detects nuclearity for unital $C^*$-algebras, strengthening previous work of Kavruk \cite{Ka15}.
\end{abstract}

\maketitle

\section{Introduction}

The Smith-Ward problem \cite{SW80} is a problem about matrix ranges of operators. Given an operator $T \in \bofh$ where $\cH$ is a Hilbert space and an $n \in \bN$, one can consider the \textbf{$n$-th matrix range} of $T$, defined as
\[ W^n(T)=\{ \Phi(T) \mid \Phi:\bofh \to M_n \text{ unital, completely positive}\}.\]
Arveson's extension theorem \cite{Ar69} shows that $W^n(T)$ depends only on $T$ and not the $C^*$-algebra containing it. As an example, the set $W^1(T)$ is the closure of the numerical range $\{ \la Tx,x\ra: x \in \cH, \, \|x\|=1\}$ of $T$. The sets $W^n(T)$ are important for many reasons, but the one most pertinent to this paper is that the operator system $\text{span}\{I,T,T^*\}$ is completely described by unital, completely positive (ucp) maps into matrix algebras by the proof of the Choi-Effros theorem (see \cite{CE77a}), which are described precisely by the sets $W^n(T)$ for $n \in \bN$.

The Smith-Ward problem, arising from the Smith-Ward theorem \cite{SW80}, concerns essential matrix ranges; that is, matrix ranges of operators in the Calkin algebra $\qofh=\bofh/\cK(\cH)$ (here $\cH$ is assumed to be infinite-dimensional and separable, and $\cK(\cH)$ denotes the compact operators). For an operator $T \in \bofh$, we let $\dot{T}$ be its quotient image in $\qofh$. The Smith-Ward theorem \cite{SW80} says that, if $T \in \bofh$ and $N \in \bN$, then there exists a compact operator $K \in \cK(\cH)$ such that $W^n(T+K)=W^n(\dot{T})$ for all $1 \leq n \leq N$.

The Smith-Ward problem \cite{SW80} asks whether, given an operator $T \in \bofh$, there exists a $K \in \cK(\cH)$ such that $W^n(T+K)=W^n(\dot{T})$ for all values of $n$. This problem was open until very recently when Scherer provided a counterexample \cite{Sch26}. Scherer's counterexample stems from a three-dimensional operator system $\cS \subseteq M_4(C_r^*(\bF_2))$ that is hyperrigid in the sense of Arveson (see \cite{Ar11} for an overview); that is, $C^*(\cS)=M_4(C_r^*(\bF_2))$ and every unital $*$-homomorphism $\pi:M_4(C_r^*(\bF_2)) \to \bofh$ has the property that there is only one ucp map $\psi:M_4(C_r^*(\bF_2)) \to \bofh$ extending $\pi_{|\cS}$. (While this is not the original form of hyperrigidity, it is equivalent to hyperrigidity for separable operator systems by \cite[Theorem~2.1]{Ar11}, and is sufficient for our purposes.) Scherer uses this hyperrigidity to show that $\cS$ does not have the lifting property--more specifically, that there is a unital $C^*$-algebra $\cA$ and an ideal $\cI \subseteq \cA$, along with a ucp map $\varphi:\cS \to \cA/\cI$, that has no ucp lift $\psi:\cS \to \cA$. (In fact, one can arrange for $\cA=\bofh$ and $\cI=\cK(\cH)$, so that $\cA/\cI=\qofh$ \cite[Proposition~7.4]{Ka14}.) One ingredient in the proof of the existence of a non-lifting ucp map is the significant result of Haagerup and Thorbj\o rnsen \cite{HT05} that the extension semigroup $\text{Ext}(C_r^*(\bF_2))$ of $C_r^*(\bF_2)$ is not a group; hence, there is an (injective) unital $*$-homomorphism $\pi:C_r^*(\bF_2) \to \qofh$ with no ucp lift to $\bofh$. 

We note that the Smith-Ward problem has a positive answer for an operator $T \in \bofh$ if and only if the associated operator system $\cS_{\dot{T}}:=\text{span}\{\dot{I},\dot{T},\dot{T}^*\} \subseteq \qofh$ has the lifting property (LP) (this is essentially the proof of \cite[Proposition~11.4]{Ka14}). Moreover, whenever $\cS$ is a three-dimensional operator system that does not have the LP, then one can find a ucp map $\varphi:\cS \to \qofh$ that does not have a ucp lift to $\bofh$ \cite[Proposition~7.4]{Ka14}. Replacing $\cS$ with $\varphi(\cS)$ yields an operator system of the form $\cS_{\dot{T}}$, and hence an operator $T$ for which the Smith-Ward problem has a negative answer. Thus, Scherer's three-dimensional operator system without the LP directly yields a counterexample to the Smith-Ward problem (see \cite{Sch26} for an argument on how to get from the operator system to the operator $T$). (We note in passing that Scherer's operator system must be three-dimensional, since all one and two-dimensional operator systems are $C^*$-nuclear and automatically satisfy the LP; see Proposition \ref{proposition: one and two dimensional systems}.)

In this paper, we generalize the hyperrigidity theorem of Scherer in the following way:

\begin{customtheorem}{\ref{theorem: S hyperrigid in matrices over A}}
(rephrased) If $\cA$ is a finitely generated unital $C^*$-algebra, then there is an $n \in \bN$ and a three-dimensional operator system $\cS \subseteq M_{n+2}(\cA)$ that is hyperrigid.
\end{customtheorem}

When $\cA=C_r^*(\bF_2)$ we have $n=2$ and the operator system $\cS$ appearing in Theorem \ref{theorem: S hyperrigid in matrices over A} is exactly the operator system in Scherer's work \cite{Sch26}. We then give a few applications of this theorem. The first is relaxing the criteria for finding counterexamples to the Smith-Ward problem:

\begin{customtheorem}{\ref{theorem: three-dimensional counterexamples to SWP}}
Let $\cA$ be a finitely generated unital $C^*$-algebra which possesses a (not necessarily injective) unital $*$-homomorphism $\pi:\cA \to \qofh$ with no ucp lift to $\bofh$. Then there is an $n \in \bN$ and a hyperrigid three-dimensional operator system $\cS \subseteq M_{n+2}(\cA)$ for which $(\id_{n+2} \otimes \pi)_{|\cS}:\cS \to M_{n+2}(\qofh)$ has no ucp lift to $M_{n+2}(\bofh)$. In particular, $\cS$ does not have the LP and hence is a counterexample to the Smith-Ward problem.
\end{customtheorem}

As a result, whenever $\cA$ is a finitely generated unital $C^*$-algebra and $\text{Ext}(\cA)$ is not a group, then for some $n \in \bN$, $M_{n+2}(\cA)$ contains a three-dimensional operator system that is hyperrigid and does not have the LP. We find this interesting since Anderson proved in 1978 that there exists a finitely generated unital $C^*$-algebra $\cA$ where $\text{Ext}(\cA)$ is not a group \cite{An78}, before the Smith-Ward problem even originated! We note that Anderson's algebra $\cA$ is generated by two unitaries (which coincidentally generate $C_r^*(\bF_2)$) and an extra projection (or equivalently, a self-adjoint unitary by replacing the projection $p$ with the self-adjoint unitary $2p-1$), so Anderson's algebra $\cA$ already yields a counterexample to the Smith-Ward problem by Theorem \ref{theorem: three-dimensional counterexamples to SWP} with $n=3$.

Next, using some facts about how hyperrigidity interacts with operator system tensor products, we show that \textit{any} finitely generated unital $C^*$-algebra $\cA$ without the local lifting property (LLP) yields a three-dimensional operator system $\cS \subseteq M_{n+2}(\cA)$ (for some $n$) that is both hyperrigid and fails the LP, thus eliminating the requirement on $\text{Ext}(\cA)$ (see Corollary \ref{corollary: non LLP yields SWP counterexample}). We also prove that, to each finite-dimensional operator system without the LP, we can associate a three-dimensional operator system $\cS$ in matrices over the universal $C^*$-algebra $C_u^*(\cT)$ that is hyperrigid and fails both the LP and exactness (see Theorem \ref{theorem: three dimensional no LP no exactness}). (In contrast, while Scherer's operator system fails the LP, it must be exact, since exactness passes to subsystems \cite[Corollary~5.8]{KPTT13} and it is a subsystem of $M_4(C_r^*(\bF_2))$, which is exact as a result of a theorem of Choi \cite{Ch79} (see also \cite{Wa94}.)

Lastly, we use Theorem \ref{theorem: three dimensional no LP no exactness} to exhibit a three-dimensional operator system that is a nuclearity detector for unital $C^*$-algebras. An operator system $\cS$ is called a \textbf{nuclearity detector} if, whenever $\cA$ is a unital $C^*$-algebra with $\cS \otimes_{\min} \cA=\cS \otimes_{\max} \cA$ (that is, there is only one operator system structure on $\cS \otimes \cA$), then $\cA$ is a nuclear $C^*$-algebra. These operator systems were popularized in a paper of Kavruk \cite{Ka15} which exhibits a family of nuclearity detectors, the smallest of which is a four-dimensional operator system known as $\cW_{3,2}$, which is contained in $M_6$. We note that all the nuclearity detectors that Kavruk gave are actually subsystems of finite-dimensional $C^*$-algebras and are duals of what are known as WEP detectors. (An operator system $\cT$ is called a \textbf{WEP detector} if, whenever $\cA$ is a unital $C^*$-algebra, then $\cT \otimes_{\min} \cA=\cT \otimes_{\max} \cA$ if and only if $\cA$ has the weak expectation property, or WEP for short.) 

Before the resolution of the Smith-Ward problem, it was not clear if there could even be any non-$C^*$-nuclear three-dimensional operator systems (if all three-dimensional operator systems were $C^*$-nuclear, then they would all have LP and the Smith-Ward problem would have a positive answer \cite{Ka14}). Now, not only do we know that this is false, but the opposite is true:

\begin{customtheorem}{\ref{theorem: three-dimensional nuclearity detector}}
There is an $n \in \bN$ with $n \leq 6$ and a three-dimensional operator system $\cS \subseteq M_{n+2}(C_u^*(\cW_{3,2}))$ with the following properties:
\begin{itemize}
\item $\cS$ is hyperrigid in $M_{n+2}(C_u^*(\cW_{3,2}))$.
\item $\cS$ does not have the LP.
\item $\cS$ is not exact.
\item $\cS$ is a nuclearity detector.
\end{itemize}
\end{customtheorem}

As this nuclearity detector is not exact, it cannot be a subsystem of a finite-dimensional $C^*$-algebra, since exactness passes to subsystems \cite[Corollary~5.8]{KPTT13}. Theorem \ref{theorem: three-dimensional nuclearity detector} further shows that, like higher-dimensional operator systems, the world of three-dimensional operator systems is far from tame, and is in fact very wild.

The rest of the paper is organized as follows. In Section \ref{section: hyperrigidity} we prove Theorem \ref{theorem: S hyperrigid in matrices over A} on the existence of three-dimensional hyperrigid operator subsystems of $M_{n+2}(\cA)$, where $\cA$ is generated by $n$ unitaries. In Section \ref{section: non LP non exact}, we prove our main results about counterexamples to the Smith-Ward problem and exhibit three-dimensional operator systems that fail both the LP and exactness. Finally, Section \ref{section: nuclearity detector} is devoted to proving the existence of a three-dimensional nuclearity detector.

\section{Hyperrigid three-dimensional operator subsystems}\label{section: hyperrigidity}

The goal of this section is to produce, for each unital $C^*$-algebra $\cA$ generated by $n$ unitaries, a three-dimensional operator system $\cS \subseteq M_{n+2}(\cA)$ so that $\cS$ is hyperrigid in $M_{n+2}(\cA)$. For this section, we fix a unital $C^*$-algebra $\cA$ that is finitely generated, and hence generated by a finite list of unitaries. We fix a finite list $u_1,...,u_n$ of unitaries that generate $\cA$ as a $C^*$-algebra and define (for convenience) $u_{n+1}=1$. We identify $M_{n+2}(\cA)$ with $M_{n+2} \otimes \cA$ throughout. Define elements $D$ and $G$ in $M_{n+2}(\cA)$ by
\begin{equation} D=\sum_{j=1}^{n+2} je_{j,j} \otimes 1 \end{equation}
\begin{equation} G=\sum_{j=1}^{n+1} (e_{1,j+1} \otimes u_i+e_{j+1,1} \otimes u_i^*)+ \sum_{j=2}^n (e_{j,j+1}+e_{j+1,j}) \otimes 1 + (e_{2,n+2}+e_{n+2,2}) \otimes 1.\end{equation}
In matrix form, the idea is that
\[ D=\begin{pmatrix} 1 & 0 & \cdots & 0 \\ 0 & 2 & \cdots & 0 \\ \vdots & \vdots & \ddots & \vdots \\ 0 & 0 & \cdots & n+2\end{pmatrix} \text{ and } G=\begin{pmatrix} 0 & u_1 & u_2 & u_3 & \cdots & u_n & 1 \\ u_1^* & 0 & 1 & 0 & \cdots & 0 &  1 \\ u_2^* & 1 & 0 & 1 & \cdots & 0 & 0 \\ u_3^* & 0 & 1 & 0 & \cdots & 0 & 0 \\ \vdots & \vdots \\ u_n^* & 0 & 0 & 0 & \cdots & 0 & 0 \\ 1 & 1 & 0 & 0 & \cdots & 0 & 0 \end{pmatrix}\]
In other words, $D$ is diagonal with entries $1,2,...,n+2$, and $G$ has the generators in the top row except entry $1$ with their adjoints in the first column, an extra $1$ in the $(2,n+2)$ and $(n+2,2)$ positions, and $1$'s on the super-diagonal and sub-diagonal from row/column $2$ to row/column $n$. This last part is so that, when $n=2$, the matrices $D$ and $G$ reduce to Scherer's matrices $D$ and $K$ \cite{Sch26}.
To streamline arguments later, we let $q_j=e_{j,j} \otimes 1$ and $s_{i,j}=q_iGq_j$ for $i,j=1,...,n+2$. Each $q_j$ is an orthogonal projection in $M_{n+2}(\cA)$ with $\sum_{j=1}^{n+2} q_j=I$. On the other hand, the block entries of $G$ are either zero or unitary, so $s_{i,j}$ is either zero, or a partial isometry with $s_{i,j}^*s_{i,j}=q_j$ and $s_{i,j}s_{i,j}^*=q_i$ in the case when $s_{i,j} \neq 0$.

We then let $\cS=\text{span}\{I,D,G\}$, which is a three-dimensional operator subsystem of $M_{n+2}(\cA)$. Our goal is to show hyperrigidity. It will follow that we must have $C^*(\cS)=M_{n+2}(\cA)$ in this setting, but it is easier to prove this first.

\begin{proposition}
\label{proposition: S generates matrices over A}
The $C^*$-algebra generated by $\cS$ is $M_{n+2}(\cA)$; that is, $C^*(\cS)=M_{n+2}(\cA)$.
\end{proposition}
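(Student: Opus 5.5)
The plan is to recover, inside $C^*(\cS)$, first the diagonal projections $q_j$, then the partial isometries $s_{i,j}$, and from these the matrix units $e_{i,j}\otimes 1$ together with the elements $e_{1,1}\otimes u_k$. Once these are in hand, the inclusion $C^*(\cS)\supseteq M_{n+2}\otimes \cA$ follows because the $u_k$ generate $\cA$. The reverse inclusion is trivial since $D,G \in M_{n+2}(\cA)$.

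\textbf{Spectral projections of $D$.} The element $D=\sum_j j\, q_j$ is self-adjoint with finite spectrum $\{1,\dots,n+2\}$, and $q_j$ is its spectral projection at $j$. Concretely, $q_j=p_j(D)$ for the Lagrange polynomial $p_j(t)=\prod_{k\neq j}(t-k)/(j-k)$. Hence every $q_j\in C^*(\cS)$, and so is every $s_{i,j}=q_iGq_j$.

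\textbf{Matrix units along the graph.} The block entries of $G$ give the following elements of $C^*(\cS)$:
\begin{itemize}
\item $s_{1,j+1}=e_{1,j+1}\otimes u_j$ for $1\le j\le n$;
\item $s_{1,n+2}=e_{1,n+2}\otimes 1$;
\item $s_{j,j+1}=e_{j,j+1}\otimes 1$ for $2\le j\le n$;
\item $s_{2,n+2}=e_{2,n+2}\otimes 1$.
\end{itemize}
Products of the scalar ones and their adjoints give $e_{1,2}\otimes 1 = s_{1,n+2}s_{2,n+2}^*$. They also give $e_{i,j}\otimes 1$ for all $i,j\in\{2,\dots,n+2\}$, since the path $2,3,\dots,n$ together with the edges $2$--$n+2$ and $1$--$n+2$ makes the index graph connected using only scalar ($\otimes 1$) edges. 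Combining these with $e_{1,2}\otimes 1$ yields every matrix unit $e_{i,j}\otimes 1$. (When $n=1$ the path part is empty, but $2$--$3=n+2$ and $1$--$3$ still connect everything.) I expect this connectivity bookkeeping, checking small cases like $n=1,2$ and that no edge needed carries a nontrivial unitary, to be the only place requiring care. It is not a genuine obstacle.

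\textbf{Recovering $\cA$.} For $1\le k\le n$ we have $(s_{1,k+1})(e_{k+1,1}\otimes 1)=e_{1,1}\otimes u_k\in C^*(\cS)$. The corner map $a\mapsto e_{1,1}\otimes a$ is a $*$-homomorphism, so $C^*(\cS)$ contains $e_{1,1}\otimes C^*(u_1,\dots,u_n)=e_{1,1}\otimes\cA$. Finally, $e_{i,j}\otimes a=(e_{i,1}\otimes 1)(e_{1,1}\otimes a)(e_{1,j}\otimes 1)$, so every elementary tensor lies in $C^*(\cS)$, and these span $M_{n+2}(\cA)$.
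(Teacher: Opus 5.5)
Your proof is correct and follows essentially the same route as the paper. You take the spectral projections of $D$, then the scalar matrix units $e_{i,j}\otimes 1$ from the $\otimes 1$ blocks of $G$ (your connectivity argument repackages the paper's explicit chain $e_{1,2}$, $e_{j,j+1}$, $e_{1,n+1}$, $e_{n+1,n+2}$), and finally $e_{1,1}\otimes u_k$ in the corner. The only slip is cosmetic: the edges $s_{j,j+1}$, $2\le j\le n$, form a path on the vertices $2,\dots,n+1$ (not $2,\dots,n$), which is exactly what your connectivity claim needs.
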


\begin{proof}
Notice that the spectrum of $D$ is exactly $\{1,2,...,n+2\}$ so by the functional calculus we see that each $q_j=e_{j,j} \otimes 1$ belongs to $C^*(\cS)$. Since $u_{n+1}=1$, we see that 
\[ e_{1,n+2} \otimes 1=s_{1,n+2}=q_1Gq_{n+2} \in C^*(\cS).\]
Next, we notice that by definition of $G$, we have
\[ e_{2,n+2} \otimes 1=s_{2,n+2}=q_2Gq_{n+2} \in C^*(\cS),\]
so that
\[ e_{1,2} \otimes 1=(e_{1,n+2} \otimes 1)(e_{2,n+2} \otimes 1)^* \in C^*(\cS).\]
By definition of $G$ we have for $j=2,...,n$ that
\[ e_{j,j+1} \otimes 1=s_{j,j+1}=q_jGq_{j+1} \in C^*(\cS).\]
Thus, we have
\[ e_{1,n+1} \otimes 1=(e_{1,2} \otimes 1)(e_{2,3} \otimes 1) \cdots (e_{n,n+1} \otimes 1) \in C^*(\cS),\]
so that
\[ e_{n+1,n+2} \otimes 1=(e_{1,n+1} \otimes 1)^*(e_{1,n+2} \otimes 1) \in C^*(\cS).\]
At this point, we have shown that $e_{j,j} \otimes 1 \in C^*(\cS)$ for all $j=1,...,n+2$ and $e_{j,j+1} \in C^*(\cS)$ for all $j=1,...,n+1$, which is enough to generate all of $M_{n+2} \otimes 1$, so that $M_{n+2} \otimes 1 \in C^*(\cS)$. Since $G$ already has each of $u_1,...,u_n$ in its block entries, one can then obtain any element of the form $e_{i,j} \otimes u_k$ in $C^*(\cS)$ for $i,j=1,...,n+2$ and $k=1,...,n$. These elements generate $M_{n+2}(\cA)$ as a $C^*$-algebra, so $C^*(\cS)=M_{n+2}(\cA)$.
\end{proof}

The next couple of lemmas and the main result of this section involve Stinespring dilations. By way of notation, if $\Phi:M_{n+2}(\cA) \to \bofh$ is a ucp map, and if $\Phi=V^*\rho(\cdot)V$ is a Stinespring representation of $\Phi$ (that is, $\cL$ is a Hilbert space, $V:\cH \to \cL$ is an isometry and $\rho:M_{n+2}(\cA) \to \bofl$ is a unital $*$-homomorphism such that $\Phi(\cdot)=V^*\rho(\cdot)V$), then we will simply refer to the triple $(\rho,V,\cL)$ as a Stinespring representation of $\Phi$.

\begin{lemma}
\label{lemma: norm calculation}
Let $\pi:M_{n+2}(\cA) \to \bofh$ be an injective unital $*$-homomorphism and let $\Phi:M_{n+2}(\cA) \to \bofh$ be a ucp map extending $\pi_{|\cS}$. Let $(\rho,V,\cL)$ be a Stinespring representation of $\Phi$. Fix $1 \leq j \leq n+2$. If $V\pi(q_j)\cH \subseteq \rho(q_j)\cL$, then for $h \in \pi(q_j)\cH$ we have
\begin{equation}
\|\rho(G)Vh\|^2 = \begin{cases} (n+1)\|h\|^2 & j=1 \\ 3\|h\|^2 & 2 \leq j \leq n \\ 2\|h\|^2 & j=n+1,n+2. \end{cases}
\end{equation}
In particular, $\rho(G)Vh=V\pi(G)h$, and $\|\rho(s_{i,j})Vh\|^2=\|h\|^2$ whenever $i$ is such that $s_{i,j} \neq 0$.
\end{lemma}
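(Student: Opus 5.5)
The plan is to decompose $\rho(G)Vh$ along the projections $\rho(q_i)$ and count the nonzero block entries in the $j$-th column of $G$. Then compare with $\pi(G)h$ to get the equality $\rho(G)Vh=V\pi(G)h$.

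\emph{Step 1 (norm of $\rho(G)Vh$).} Since $h\in\pi(q_j)\cH$, the hypothesis gives $Vh\in\rho(q_j)\cL$, so $\rho(q_j)Vh=Vh$. Because $\sum_i q_i=I$ and the $q_i$ are pairwise orthogonal projections, we have
\[ \rho(G)Vh=\sum_{i=1}^{n+2}\rho(q_i)\rho(G)\rho(q_j)Vh=\sum_{i=1}^{n+2}\rho(s_{i,j})Vh, \]
and the summands lie in the mutually orthogonal subspaces $\rho(q_i)\cL$. Hence $\|\rho(G)Vh\|^2=\sum_i\|\rho(s_{i,j})Vh\|^2$.

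Whenever $s_{i,j}\neq 0$, it is a partial isometry with $s_{i,j}^*s_{i,j}=q_j$. Therefore $\|\rho(s_{i,j})Vh\|^2=\langle \rho(q_j)Vh,Vh\rangle=\|Vh\|^2=\|h\|^2$, using that $V$ is an isometry. This already gives the final claim of the lemma, and shows $\|\rho(G)Vh\|^2=d_j\|h\|^2$, where $d_j$ is the number of $i$ with $s_{i,j}\neq0$.

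\emph{Step 2 (counting).} We read $d_j$ off the matrix form of $G$.
\begin{itemize}
\item Column $1$ has the nonzero entries $u_1^*,\dots,u_n^*,1$ in rows $2,\dots,n+2$, so $d_1=n+1$.
\item Column $2$ has nonzero entries in rows $1$, $3$ and $n+2$, so $d_2=3$.
\item For $3\le j\le n$, column $j$ has nonzero entries in rows $1$, $j-1$ and $j+1$, so $d_j=3$.
\item Column $n+1$ has nonzero entries in rows $1$ and $n$ (the latter from $e_{n,n+1}$), so $d_{n+1}=2$.
\item Column $n+2$ has nonzero entries in rows $1$ and $2$, so $d_{n+2}=2$.
\end{itemize}
This bookkeeping, including the small-$n$ overlaps among rows $2$, $n$, $n+1$, $n+2$, is the only fiddly part. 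It has to be checked directly against the definition of $G$.

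\emph{Step 3 ($\rho(G)Vh=V\pi(G)h$).} Apply the same decomposition to $\pi$: since $\pi(q_j)h=h$, we get $\pi(G)h=\sum_i\pi(s_{i,j})h$ with orthogonal summands, each of norm $\|h\|$. Hence $\|\pi(G)h\|^2=d_j\|h\|^2=\|\rho(G)Vh\|^2$.

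On the other hand, $\Phi$ extends $\pi_{|\cS}$, so $V^*\rho(G)Vh=\Phi(G)h=\pi(G)h$. Since $VV^*$ is the projection onto $V\cH$, we have $\|VV^*\rho(G)Vh\|=\|\pi(G)h\|=\|\rho(G)Vh\|$. A projection preserving the norm of a vector fixes that vector, so $\rho(G)Vh=VV^*\rho(G)Vh=V\pi(G)h$.

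The main obstacle is really just this norm-equality trick combined with the correct entry count; everything else is formal.
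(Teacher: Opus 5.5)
Your proof is correct and is essentially the paper's argument. It uses the same orthogonal decomposition $\rho(G)Vh=\sum_i\rho(s_{i,j})Vh$, the same column count, and the same observation that $VV^*$ preserves the norm of $\rho(G)Vh$ and hence fixes it.

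The only real difference is that you compute $\|\rho(s_{i,j})Vh\|^2=\langle\rho(q_j)Vh,Vh\rangle=\|h\|^2$ directly from $s_{i,j}^*s_{i,j}=q_j$. This gives the equality at once and does not use injectivity of $\pi$, whereas the paper first proves an upper bound and then forces equality via $\|\pi(G)h\|\le\|\rho(G)Vh\|$.

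One small slip in your count: for $n=1$, the second nonzero entry of column $n+1=2$ lies in row $3=n+2$ (from $e_{n+2,2}$), not row $n$. The count $d_{n+1}=2$ still holds.
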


\begin{proof}
Note that $\rho(s_{i,\ell})$ is either zero or a partial isometry with initial space $\rho(q_{\ell})\cL$ and final space $\rho(q_i)\cL$. Let $h \in \pi(q_j)\cH$ and assume that $V\pi(q_j)\cH \subseteq \rho(q_j)\cL$. We can write $\rho(G)Vh=\sum_{i,\ell=1}^{n+2} \rho(s_{i,\ell})Vh$. But $\rho(s_{i,\ell})Vh=0$ as long as $\ell \neq j$, which forces
\[ \rho(G)Vh=\sum_{i=1}^{n+2} \rho(s_{i,j})Vh.\]
As $\rho(s_{i,j})$ is either zero or a partial isometry with final space $\rho(q_i)$, the terms in the sum for $\rho(G)Vh$ are pairwise orthogonal since $\{\rho(q_1),...,\rho(q_{n+2})\}$ are pairwise orthogonal. Moreover, $\|\rho(s_{i,j})Vh\|^2$ is equal to $\|h\|^2$ if $\rho(s_{i,j}) \neq 0$. Considering the non-zero blocks of $G$, there are $n+1$ non-zero blocks in column $1$, and $3$ non-zero blocks in columns $2,3,...,n$, while there are $2$ non-zero blocks in columns $n+1,n+2$. Thus, of the list of elements $\{ \rho(s_{i,j})\}_{i=1}^{n+2}$, at most $n+1$ are non-zero if $j=1$; at most $3$ are non-zero if $2 \leq j \leq n$, and at most $2$ are non-zero if $j=n+1$ or $j=n+2$. So we obtain the inequality
\begin{equation}
\| \rho(G)Vh\|^2=\sum_{i=1}^{n+2} \|\rho(s_{i,j})Vh\|^2 \leq \begin{cases} (n+1)\|h\|^2 & j=1 \\ 3\|h\|^2 & 2 \leq j \leq n \\ 2\|h\|^2 & j=n+1,n+2. \end{cases} \label{equation: rho G on Vpi qj}
\end{equation}
However, applying $V^*$ yields $V^*\rho(G)Vh=\Phi(G)h=\pi(G)h$ since $G \in \cS$, so that
\begin{equation}
\|\pi(G)h\|^2 \leq \|V^*\|^2 \|\rho(G)Vh\|^2=\|\rho(G)Vh\|^2. \label{equation: compare pi(G)h norm to rho(G)Vh norm}
\end{equation}
Since $\pi$ is injective, the same argument as above yields the same claims about norm, except that they are now equalities since $\pi(s_{i,j})$ is a partial isometry whenever $s_{i,j} \neq 0$. We obtain
\begin{equation}
\|\pi(G)h\|^2=\sum_{i=1}^{n+2} \|\pi(s_{i,j})h\|^2= \begin{cases} (n+1)\|h\|^2 & j=1 \\ 3\|h\|^2 & 2 \leq j \leq n \\ 2\|h\|^2 & j=n+1,n+2. \end{cases} \label{equation: pi G on pi qj}
\end{equation}
Using (\ref{equation: compare pi(G)h norm to rho(G)Vh norm}), we see that (\ref{equation: rho G on Vpi qj}) is an equality. Thus, $\|\rho(G)Vh\|=\|\pi(G)h\|=\|V^*\rho(G)Vh\|$ whenever $h \in \pi(q_j)\cH$. Since $V^*$ preserves the norm of $\rho(G)Vh$, we must have $\rho(G)Vh \in V\cH$, so that
\[ \rho(G)Vh=VV^*\rho(G)Vh=V\pi(G)h, \, \, h \in \pi(q_j)\cH.\]
Moreover, each non-zero term in the sum in (\ref{equation: rho G on Vpi qj}) must equal $\|h\|^2$ to obtain equality, which completes the proof.
\end{proof}

\begin{lemma}
\label{lemma: moving the projections via V}
Let $\pi:M_{n+2}(\cA) \to \bofh$ be an injective unital $*$-homomorphism and let $\Phi:M_{n+2}(\cA) \to \bofh$ be a ucp map extending $\pi_{|\cS}$. Let $(\rho,V,\cL)$ be a Stinespring representation of $\Phi$. Assume that $V\pi(q_1)\cH \subseteq \rho(q_1)\cL$ and $V\pi(q_{n+2})\cH \subseteq \rho(q_{n+2})\cL$. Then $V\pi(q_j)\cH \subseteq \rho(q_j)\cL$ for all $j=1,...,n+2$.
\end{lemma}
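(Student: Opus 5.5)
The plan is to propagate the inclusion $V\pi(q_j)\cH \subseteq \rho(q_j)\cL$ along the nonzero blocks of $G$. We start from the two known indices $j=1$ and $j=n+2$ and then move through $j=2,3,\dots,n+1$. The tool is Lemma \ref{lemma: norm calculation}, combined with a way to isolate a single block $\rho(s_{i,j})Vh$ once all but one of the rows meeting column $j$ are already under control.

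\textbf{Key isolating step.} Suppose the hypothesis holds for $j$, so for $h \in \pi(q_j)\cH$ Lemma \ref{lemma: norm calculation} gives
\[ \sum_{i} \rho(s_{i,j})Vh=\rho(G)Vh=V\pi(G)h=\sum_i V\pi(s_{i,j})h. \]
Suppose also that the hypothesis holds for some $i$ with $s_{i,j}\neq 0$. For $k \in \pi(q_i)\cH$ we have $Vk \in \rho(q_i)\cL$, so pairing with $Vk$ kills every term except the $i$-th on the left. This gives
\[ \la \rho(s_{i,j})Vh, Vk\ra = \la V\pi(G)h, Vk\ra = \la \pi(s_{i,j})h,k\ra. \]
Since $\rho(s_{i,j})Vh$ lies in $\rho(q_i)\cL$, we have $\la \rho(s_{i,j})Vh, x\ra=\la \rho(s_{i,j})Vh,\rho(q_i)x\ra$ for all $x\in\cL$. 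Hence the vector $\rho(s_{i,j})Vh$ is orthogonal to $V\pi(q_{i'})\cH$ for $i'\neq i$, because those spaces sit in $\rho(q_{i'})\cL$ whenever $i'$ is a known index. To avoid this bookkeeping, I will instead use the cleaner computation
\[ V^*\rho(s_{i,j})Vh=V^*\rho(q_i)\rho(G)Vh=V^*\rho(q_i)V\pi(G)h. \]
By the hypothesis at $i$ and orthogonality of the $\rho(q_\ell)$, $\rho(q_i)V=V\pi(q_i)$ on $\pi(q_i)\cH$ and vanishes on the other known $\pi(q_\ell)\cH$. In any case the pairing above shows $V^*\rho(s_{i,j})Vh=\pi(s_{i,j})h$ up to components in unknown blocks. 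The cleanest route is the pairing itself: it gives $\|\rho(s_{i,j})Vh\|\ge\|\pi(q_i)\,\cdot\|$-type control. Lemma \ref{lemma: norm calculation} gives $\|\rho(s_{i,j})Vh\|=\|h\|=\|\pi(s_{i,j})h\|$. Equality in the Cauchy--Schwarz-type estimate then forces
\[ \rho(s_{i,j})Vh=V\pi(s_{i,j})h. \]
Doing this for every known row $i$ and subtracting from the displayed sum, the single remaining unknown row $i_0$ satisfies $\rho(s_{i_0,j})Vh=V\pi(s_{i_0,j})h$ (here we use $\rho(s_{i_0,j})Vh\in\rho(q_{i_0})\cL$). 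Since $\pi(s_{i_0,j})$ maps $\pi(q_j)\cH$ onto $\pi(q_{i_0})\cH$, this yields $V\pi(q_{i_0})\cH\subseteq\rho(q_{i_0})\cL$.

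\textbf{Order of induction.} Column $n+2$ has nonzero rows $1$ and $2$, and row $1$ is known, so we get $j=2$. For $2\le j\le n$, column $j$ has nonzero rows $1$, $j+1$, and either $j-1$ (if $j\ge3$) or $n+2$ (if $j=2$). All of these except $j+1$ are already known, so we obtain $j+1$. This reaches $n+1$ and finishes the proof.

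\textbf{Main obstacle.} The main obstacle is the isolating step. We must show rigorously that the $i$-th block $\rho(s_{i,j})Vh$ coincides with $V\pi(s_{i,j})h$ using only the hypothesis at $i$ and the norm equality from Lemma \ref{lemma: norm calculation}. The route I would try first is the pairing: projecting onto $V\pi(q_i)\cH\subseteq\rho(q_i)\cL$ recovers $\pi(s_{i,j})h$, which already has full norm $\|h\|=\|\rho(s_{i,j})Vh\|$. Equality of norms then forces the vector to lie in that subspace.
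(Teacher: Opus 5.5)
Your proof is correct. Its skeleton is the same as the paper's: the order $n+2 \to 2 \to 3 \to \cdots \to n+1$, with Lemma \ref{lemma: norm calculation} applied at the already-known column $j$. What differs is how you isolate the new row $i_0$.

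The paper compresses $\rho(G)Vh=V\pi(G)h$ by the \emph{unknown} projection $\rho(q_{i_0})$. On the right, every known-row term $V\pi(s_{i,j})h$ already lies in $\rho(q_i)\cL$ and is killed, leaving $\rho(s_{i_0,j})Vh=\rho(q_{i_0})V\pi(s_{i_0,j})h$. Since both $\rho(s_{i_0,j})Vh$ and $V\pi(s_{i_0,j})h$ have norm $\|h\|$, $\rho(q_{i_0})$ must fix $V\pi(s_{i_0,j})h$. That is one norm comparison per step.

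You go the other way. For each known row $i$ you pin down $\rho(s_{i,j})Vh=V\pi(s_{i,j})h$ exactly, then subtract in the column sum. This costs one norm/Cauchy--Schwarz argument per known row. You are pushed into it because compressing by $\rho(q_i)$ for a known $i$ leaves the uncontrolled term $\rho(q_i)V\pi(s_{i_0,j})h$; the paper's choice of projection avoids that term entirely. In exchange, your subtraction gives the exact identity $\rho(s_{i_0,j})Vh=V\pi(s_{i_0,j})h$ with no norm argument needed for row $i_0$.

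When writing it up, drop the detours in the isolating step: the abandoned $V^*\rho(q_i)V\pi(G)h$ computation and the ``$\|\pi(q_i)\,\cdot\|$-type control''. State the decisive step explicitly instead. Taking $k=\pi(s_{i,j})h\in\pi(q_i)\cH$ in your pairing gives $\la \rho(s_{i,j})Vh, V\pi(s_{i,j})h\ra=\|h\|^2$. Both vectors have norm $\|h\|$: the first by Lemma \ref{lemma: norm calculation}, the second because $s_{i,j}^*s_{i,j}=q_j$. Equality in Cauchy--Schwarz then forces the two vectors to coincide.
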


\begin{proof}
By Lemma \ref{lemma: norm calculation}, we have $\rho(G)Vh=V\pi(G)h$ whenever $h \in \pi(q_1)\cH$ or $h \in \pi(q_{n+2})\cH$. We first show that $V\pi(q_2)\cH \subseteq \rho(q_2)\cL$. To do this, let $h \in \pi(q_{n+2})\cH$. Then
\[ \rho(q_2)\rho(G)Vh=\rho(q_2)V\pi(G)h=\rho(q_2)V\sum_{i=1}^{n+2} \pi(s_{i,n+2})h=\rho(q_2)V\sum_{i=1}^2 \pi(s_{i,n+2})h,\]
where the last equality holds since $s_{i,n+2}=0$ for $i \geq 3$. Note that since $Vh \in \rho(q_{n+2})\cL$, we have $\rho(s_{2,i})Vh=0$ if $i \neq n+2$, so can also write
\[ \rho(q_2)\rho(G)Vh=\sum_{\ell=1}^{n+2} \rho(s_{2,\ell})Vh=\rho(s_{2,n+2})Vh,\]
while the fact that $\pi(s_{1,n+2})h \in \pi(q_1)\cH$ and $V\pi(q_1)\cH\subseteq \rho(q_1)\cL$ shows that $\rho(q_2)V\pi(s_{1,n+2})h=0$, so
\[ \rho(q_2)V\sum_{i=1}^{2} \pi(s_{i,n+2})h=\rho(q_2) V \pi(s_{2,n+2})h.\]
Thus,
\[ \rho(s_{2,n+2})Vh=\rho(q_2) V  \pi(s_{2,n+2})h.\]
Since $\pi$ is injective, $s_{2,n+2}$ is a partial isometry and $V$ is an isometry, we have $\|V\pi(s_{2,n+2})h\|=\|h\|$. On the other hand, by Lemma \ref{lemma: norm calculation} with $j=n+2$ we have that $\|\rho(s_{2,n+2})Vh\|=\|h\|$. Since $\rho(q_2)$ preserves the norm of $V\pi(s_{2,n+2})h$, it follows that $V\pi(s_{2,n+2})h \in \rho(q_2)\cL$. By injectivity of $\pi$, $\pi(s_{2,n+2})$ sends the subspace $\pi(q_{n+2})\cH$ onto $\pi(q_2)\cH$, so it follows that
\[ V \pi(q_2)\cH \subseteq \rho(q_2)\cL.\]

Now, let $k \in \{3,...,n+1\}$ and suppose that we know that $V\pi(q_j)\cH \subseteq \rho(q_j)\cL$ for all $2 \leq j \leq k-1$; we will show that $V\pi(q_k)\cH \subseteq \rho(q_k)\cL$. To do this, let $h \in \pi(q_{k-1})\cH$. Since we already know that $V\pi(q_{k-1})\cH \subseteq \rho(q_{k-1})\cL$, we can use Lemma \ref{lemma: norm calculation} to conclude that $\rho(G)Vh=V\pi(G)h$. Then
\[ \rho(q_k)\rho(G)Vh=\rho(q_k)V\pi(G)h=\rho(q_k)V\sum_{i=1}^{n+2} \pi(s_{i,k-1})h.\]
Since $3 \leq k \leq n+1$, there are exactly three choices of $i$ for which $s_{i,k-1} \neq 0$: if $k=3$ then these are $i=1$, $i=3$, $i=n+2$, and if $4 \leq k \leq n+1$ then these are $i=1$, $i=k-2$ and $i=k$. Note that since $Vh \in \rho(q_{k-1})\cL$, we have $\rho(s_{i,j})Vh=0$ if $j \neq k-1$, so can also write
\[ \rho(q_k)\rho(G)Vh=\sum_{i=1}^{n+2} \rho(q_k)\rho(s_{i,k-1})Vh=\rho(s_{k,k-1})Vh.\]
Next, considering the indices $i$ for which $s_{i,k-1} \neq 0$ as before, the only indices $i$ that occur in the sum defining $\rho(q_k)V\pi(G)h$ are either those for which we already have $V\pi(q_i)\cH \subseteq \rho(q_i)\cL$, or $i=k$, so we obtain
\[ \rho(q_k)V\sum_{i=1}^{n+2} \pi(s_{i,k-1})h=\rho(q_k)V\pi(s_{k,k-1})h.\]
It follows that
\[ \rho(s_{k,k-1})Vh=\rho(q_k)V\pi(s_{k,k-1})h.\]
Since we assumed that $V\pi(q_{k-1})\cH \subseteq \rho(q_{k-1})\cL$, by Lemma \ref{lemma: norm calculation}, since $V$ is an isometry and $Vh \in V\pi(q_{k-1})\cH \subseteq \rho(q_{k-1})\cL$, we must have $\|\rho(s_{k,k-1})Vh\|=\|h\|$, while $\|V \pi(s_{k,k-1})h\|=\|h\|$ by injectivity of $\pi$. Since $\rho(q_k)$ does not decrease the norm of $V\pi(s_{k,k-1})h$, we must have $V\pi(s_{k,k-1})h \in \rho(q_k)\cL$. As $\pi(s_{k,k-1})$ maps the subspace $\pi(q_{k-1})\cH$ onto $\pi(q_k)\cH$, we then see that
\[ V\pi(q_k)\cH \subseteq \rho(q_k)\cL,\]
as desired. By induction, this result holds for all $k=3,...,n+1$, and hence for all $k=1,...,n+2$ by the assumption in the lemma statement.
\end{proof}

\begin{theorem}
\label{theorem: S hyperrigid in matrices over A}
Whenever $\pi:M_{n+2}(\cA) \to \bofh$ is an injective unital $*$-homomorphism and $\Phi:M_{n+2}(\cA) \to \bofh$ is a ucp map extending $\pi_{|\cS}$, then $\Phi=\pi$. In particular, $\cS$ is hyperrigid in $M_{n+2}(\cA)$.
\end{theorem}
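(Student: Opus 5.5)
The plan is to show that $V$ intertwines $\pi$ and $\rho$ on $\cS$'s generating set, and then conclude by multiplicativity. Concretely, if we can show that $V\pi(q_j)\cH \subseteq \rho(q_j)\cL$ for every $j$, then Lemma \ref{lemma: norm calculation} gives $\rho(G)Vh=V\pi(G)h$ on each $\pi(q_j)\cH$. Since these subspaces span $\cH$, we get $\rho(G)V=V\pi(G)$, and similarly $\rho(q_j)V=V\pi(q_j)$. Because $G$ is self-adjoint, $V\cH$ is then invariant under $\rho(G)$, $\rho(D)$ and their adjoints, i.e. $V\cH$ reduces $\rho(C^*(\cS))$. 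By Proposition \ref{proposition: S generates matrices over A}, $C^*(\cS)=M_{n+2}(\cA)$, so $V\cH$ reduces $\rho(M_{n+2}(\cA))$. Hence $\Phi=V^*\rho(\cdot)V$ is multiplicative, a $*$-homomorphism agreeing with $\pi$ on generators, and therefore $\Phi=\pi$. Equivalently, the set $\{x : \rho(x)V=V\pi(x)\}$ is a closed subalgebra which is $*$-closed here, and it contains $D$ and $G$.

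By Lemma \ref{lemma: moving the projections via V}, it suffices to prove the inclusions for $j=1$ and $j=n+2$. These come from the diagonal operator $D$. Since $\Phi(D)=\pi(D)$, for $h\in\pi(q_1)\cH$ we have
\[ 1\cdot\|h\|^2=\langle \pi(D)h,h\rangle=\langle \rho(D)Vh,Vh\rangle=\sum_j j\,\|\rho(q_j)Vh\|^2 . \]
Here $\sum_j \|\rho(q_j)Vh\|^2=\|h\|^2$ and $j\ge 1$, so equality forces $\rho(q_j)Vh=0$ for $j\ge2$, i.e. $Vh\in\rho(q_1)\cL$. The same argument at the top of the spectrum, using $j\le n+2$ and $\langle\pi(D)h,h\rangle=(n+2)\|h\|^2$ for $h\in\pi(q_{n+2})\cH$, gives $V\pi(q_{n+2})\cH\subseteq\rho(q_{n+2})\cL$. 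This step is the extremal-point argument: the extreme eigenvalues $1$ and $n+2$ of $D$ are exactly why $D$ pins down $q_1$ and $q_{n+2}$.

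The step most likely to need care is passing from the intertwining relations to $\Phi=\pi$. I will argue it as follows. From $\rho(q_j)V=V\pi(q_j)$ and $\rho(G)V=V\pi(G)$, taking adjoints gives $V^*\rho(x)=\pi(x)V^*$ for $x\in\{q_j,G\}$. So $VV^*$ commutes with $\rho(x)$, and the commutant of $VV^*$ contains the generators of $\rho(C^*(\cS))=\rho(M_{n+2}(\cA))$. Then for all $a,b$,
\[ \Phi(ab)=V^*\rho(a)\rho(b)V=V^*\rho(a)VV^*\rho(b)V=\Phi(a)\Phi(b), \]
so $\Phi$ is a $*$-homomorphism. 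It agrees with $\pi$ on $q_j$ and $G$, which generate the algebra, hence $\Phi=\pi$. For the final claim, hyperrigidity follows via the uniqueness-of-extension characterization quoted in the introduction (\cite[Theorem~2.1]{Ar11}). That characterization is stated for arbitrary unital $*$-homomorphisms, but it suffices to treat the injective ones. Given a non-injective $\pi$, replace it by $\pi\oplus\sigma$ with $\sigma$ faithful. Any ucp extension $\psi$ of $\pi_{|\cS}$ yields the extension $\psi\oplus\sigma$ of $(\pi\oplus\sigma)_{|\cS}$, which must equal $\pi\oplus\sigma$ by the first part, and compressing back gives $\psi=\pi$.
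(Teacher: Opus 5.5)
Your proposal is correct and follows essentially the paper's argument. The extreme eigenvalues $1$ and $n+2$ of $D$ force $V\pi(q_1)\cH\subseteq\rho(q_1)\cL$ and $V\pi(q_{n+2})\cH\subseteq\rho(q_{n+2})\cL$; Lemmas \ref{lemma: moving the projections via V} and \ref{lemma: norm calculation} then give $\rho(D)V=V\pi(D)$ and $\rho(G)V=V\pi(G)$, and the reducing-subspace argument yields $\Phi=\pi$. The only differences are cosmetic: you phrase the eigenvalue step as a weighted average rather than a kernel computation, and you prove the reduction to injective $\pi$ directly via the $\pi\oplus\sigma$ trick, where the paper cites Arveson \cite{Ar11}.
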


\begin{proof}
If every injective unital $*$-homomorphism $\pi:M_{n+2}(\cA) \to \bofh$ is such that $\pi_{|\cS}$ has a unique extension to $M_{n+2}(\cA)$, then a theorem of Arveson \cite{Ar11} readily implies that $\cS$ has the unique extension property for all unital $*$-homomorphisms of $M_{n+2}(\cA)$, which implies that $\cS$ is hyperrigid in $M_{n+2}(\cA)$.

Thus, we proceed by letting $\pi:M_{n+2}(\cA) \to \bofh$ be an injective unital $*$-homomorphism and letting $\Phi$ be a ucp map from $M_{n+2}(\cA)$ to $\bofh$ extending $\pi_{|\cS}$. We let $(\rho,V,\cL)$ be a Stinespring representation of $\Phi$. We have
\[ \pi(D)=\sum_{j=1}^{n+2} j \pi(q_j) \text{ and } \rho(D)=\sum_{j=1}^{n+2} j\rho(q_j).\]

Now, we first show that $V\pi(q_1)\cH \subseteq \rho(q_1)\cL$ and $V\pi(q_{n+2})\cH \subseteq \rho(q_{n+2})\cL$. We start by letting $h \in \pi(q_1)\cH$; then $\pi(q_j)h=0$ for each $j=2,...,n+2$. Since $\pi(D)-I_{\cH}=\Phi(D)-I_{\cH}=\sum_{j=2}^{n+2} (j-1)\pi(q_j)$, we have
\[\la (\pi(D)-I_{\cH})h,h \ra=\la (\Phi(D)-I_{\cH})h,h \ra=\la V^*(\rho(D)-I_{\cL})Vh,h \ra=\la (\rho(D)-I_{\cL})Vh,Vh \ra.\]

We note that $D-I \geq 0$ so $\rho(D)-I_{\cL} \geq 0$. It follows that $(\rho(D)-I_{\cL})^{\frac{1}{2}} Vh=0$, so that $(\rho(D)-I_{\cL})Vh=0$. Thus, $V\pi(q_1)\cH \subseteq \ker(\rho(D)-I_{\cL})$. On the other hand, for a vector $x \in \cL$, we have $x \in \ker(\rho(D)-I_{\cL})$ if and only if $\la (\rho(D)-I_{\cL})x,x \ra=0$ since $\rho(D)-I_{\cL}$ is positive. But this is equivalent to having
\[ 0=\la (\rho(D)-I_{\cL})x,x \ra=\sum_{j=2}^{n+2} (j-1) \la \rho(q_j)x,x \ra.\]
As each $\la \rho(q_j)x,x \ra$ is non-negative, we see that $x \in \ker(\rho(D)-I_{\cL})$ if and only if $\la \rho(q_j)x,x \ra=0$ for all $j=2,...,n+2$, which is if and only if $\rho(q_j)x=0$ for all $j=2,...,n+2$. Since $\sum_{j=1}^{n+2} \rho(q_j)=I_{\cL}$, we see that $x \in \ker(\rho(D)-I_{\cL})$ if and only if $x=\rho(q_1)x \in \rho(q_1)\cL$. It follows that
\begin{equation} V\pi(q_1)\cH \subseteq \ker(\rho(D)-I_{\cL})=\rho(q_1)\cL.
\label{equation: moving q1} \end{equation}
By an identical argument, since $D \leq (n+2)I$, one can show that if $h \in \pi(q_{n+2})\cH$, then
\[ 0=\left\la \sum_{j=1}^{n+1} (n+2-j)\pi(q_j)h,h \right\ra=\la ((n+2)I_{\cL}-\rho(D))Vh,Vh \ra\]
so that 
\begin{equation} V\pi(q_{n+2})\cH \subseteq \ker((n+2)I_{\cL}-\rho(D))=\rho(q_{n+2})\cL. \label{equation: moving q(n+2)}
\end{equation}

Thus, applying Lemma \ref{lemma: moving the projections via V}, we have that $V\pi(q_j)\cH \subseteq \rho(q_j)\cL$ for all $j=1,...,n+2$. Thus, for each $j=1,...,n+2$, $\rho(D)$ scales $V\pi(q_j)$ by $j$; that is, $\rho(D)V\pi(q_j)=jV\pi(q_j)$. Using the fact that $V^*\rho(D)V=\Phi(D)=\pi(D)$, we have
\begin{align*}
\rho(D)V&=VV^*\rho(D)V+(I-VV^*)\rho(D)V \\
&=V\pi(D)+(I-VV^*)\sum_{j=1}^{n+2} \rho(D)V\pi(q_j) \\
&=V\pi(D)+(I-VV^*)\sum_{j=1}^{n+2} jV\pi(q_j) \\
&=V\pi(D)
\end{align*}
where the last step follows since $V^*V=I$ forces $(I-VV^*)V=0$. Next, if $h \in \cH$, then using the fact that $V\pi(q_j)\cH \subseteq \rho(q_j)\cL$ for each $j$, we have $\rho(G)V\pi(q_j)h=V\pi(G)\pi(q_j)h$ for each $j$ by Lemma \ref{lemma: norm calculation}, so we have
\[ \rho(G)Vh=\sum_{j=1}^{n+2} \rho(G)V\pi(q_j)h=\sum_{j=1}^{n+2} V\pi(G) \pi(q_j)h=V\pi(G)h.\]
This shows that the subspace $V\cH$ of $\cL$ is invariant for both $\rho(D)$ and $\rho(G)$. These operators are self-adjoint, so $V\cH$ is reducing for $\rho(D)$ and $\rho(G)$. As $M_{n+2}(\cA)$ is generated as a $C^*$-algebra by $D$ and $G$, $V$ is reducing for $\rho(M_{n+2}(\cA))$, making $\Phi$ a $*$-homomorphism. As $\Phi$ agrees with $\pi$ on $\cS$ and $\cS$ generates $M_{n+2}(\cA)$, we have $\Phi=\pi$, as desired.
\end{proof}

\section{Three-dimensional operator systems with neither LP nor exactness}\label{section: non LP non exact}

In this section, we prove that any finitely generated unital $C^*$-algebra with a non-lifting unital $*$-homomorphism into $\qofh$ yields a three-dimensional subsystem $\cS \subseteq M_{n+2}(\cA)$, for some $n$, without the lifting property (Theorem \ref{theorem: three-dimensional counterexamples to SWP}). Then, using how hyperrigidity interacts with certain operator system tensor products, we relax the assumption on $\cA$ to the assumption that $\cA$ is finitely generated and does not have the LLP (Corollary \ref{corollary: non LLP yields SWP counterexample}). Lastly, we provide a ubiquity of three-dimensional operator systems that fail both exactness and the LP (Theorem \ref{theorem: three dimensional no LP no exactness}). All of these operator systems yield counterexamples to the Smith-Ward problem.

While we will have more general results about counterexamples to the Smith-Ward problem than Theorem \ref{theorem: three-dimensional counterexamples to SWP} later in this section, we feel that the following theorem is important enough to state first.

\begin{theorem}
\label{theorem: three-dimensional counterexamples to SWP}
Let $\cA$ be a finitely generated unital $C^*$-algebra which possesses a (not necessarily injective) unital $*$-homomorphism $\pi:\cA \to \qofh$ with no ucp lift to $\bofh$. Then there is an $n \in \bN$ and a hyperrigid three-dimensional operator system $\cS \subseteq M_{n+2}(\cA)$ for which $(\id_{n+2} \otimes \pi)_{|\cS}:\cS \to M_{n+2}(\qofh)$ has no ucp lift to $M_{n+2}(\bofh)$. In particular, $\cS$ does not have the LP and hence is a counterexample to the Smith-Ward problem.
\end{theorem}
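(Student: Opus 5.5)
The plan is to argue by contradiction. A ucp lift of $(\id_{n+2}\otimes\pi)_{|\cS}$ would extend to a ucp lift of all of $\id_{n+2}\otimes\pi$, by hyperrigidity. Compressing that lift to a corner would then yield a ucp lift of $\pi$ itself, contradicting the hypothesis.

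\textbf{Setup.} Since $\cA$ is finitely generated, it is generated by finitely many unitaries $u_1,\dots,u_n$. Write each generator as a combination of unitaries and, if necessary, add extra unitaries so that the construction of Section \ref{section: hyperrigidity} applies. Let $\cS=\text{span}\{I,D,G\}\subseteq M_{n+2}(\cA)$ be the operator system constructed there. By Theorem \ref{theorem: S hyperrigid in matrices over A}, $\cS$ is hyperrigid.

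Suppose $\psi:\cS\to M_{n+2}(\bofh)=\cB(\cH^{n+2})$ is a ucp lift of $(\id_{n+2}\otimes\pi)_{|\cS}$. By Arveson's extension theorem, $\psi$ extends to a ucp map $\Psi:M_{n+2}(\cA)\to\cB(\cH^{n+2})$. Let $q:M_{n+2}(\bofh)\to M_{n+2}(\qofh)$ be the quotient map. Then $q\circ\Psi$ is a ucp map on $M_{n+2}(\cA)$ that agrees on $\cS$ with the unital $*$-homomorphism $\id_{n+2}\otimes\pi$.

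\textbf{Step 1: apply hyperrigidity.} We want to conclude that $q\circ\Psi=\id_{n+2}\otimes\pi$. Theorem \ref{theorem: S hyperrigid in matrices over A} is stated only for injective representations, while $\id_{n+2}\otimes\pi$ need not be injective, so a small detour is needed.

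Choose a faithful unital representation $\sigma$ of $M_{n+2}(\qofh)$ on a Hilbert space $\cK$, and a faithful unital representation $\tau$ of $M_{n+2}(\cA)$ on a Hilbert space $\cK'$. Then
\[\theta=\sigma\circ(\id_{n+2}\otimes\pi)\oplus\tau\]
is an injective unital $*$-homomorphism of $M_{n+2}(\cA)$. The map
\[\Phi=\sigma\circ q\circ\Psi\oplus\tau\]
is ucp and agrees with $\theta$ on $\cS$. By Theorem \ref{theorem: S hyperrigid in matrices over A}, $\Phi=\theta$. Comparing the first summands gives $\sigma\circ q\circ\Psi=\sigma\circ(\id_{n+2}\otimes\pi)$. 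Since $\sigma$ is faithful, $q\circ\Psi=\id_{n+2}\otimes\pi$. Alternatively, one can invoke directly the unique extension property for all representations, which follows from hyperrigidity via \cite{Ar11}.

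\textbf{Step 2: compress to a corner.} Define $\varphi:\cA\to\bofh$ by letting $\varphi(a)$ be the $(1,1)$ entry of $\Psi(e_{1,1}\otimes a)$. This is a compression of a cp map, so $\varphi$ is cp. Since $q\circ\Psi(e_{1,1}\otimes a)=e_{1,1}\otimes\pi(a)$, we get $q(\varphi(a))=\pi(a)$. Hence $\varphi$ is a cp lift of $\pi$ with $\varphi(1)=I+K$ for some compact $K$.

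\textbf{Main obstacle: the unital correction.} The map $\varphi$ is only cp, not unital, and the hypothesis forbids ucp lifts, so we must upgrade $\varphi$ to a ucp lift. I would use the standard perturbation argument. Since $\varphi(1)-I$ is compact and self-adjoint, choose a finite-rank spectral projection $P$ of $\varphi(1)$ such that $(I-P)\varphi(1)(I-P)$ is invertible on $(I-P)\cH$, with inverse a compact perturbation of the identity there. Let $b$ be the inverse square root of $(I-P)\varphi(1)(I-P)$ on that subspace, and fix a state $\omega$ on $\cA$. Set
\[\varphi'(a)=b(I-P)\varphi(a)(I-P)b+\omega(a)P.\]
This map is ucp. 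It differs from $\varphi$ by a compact operator, so $\varphi'$ is still a lift of $\pi$. This is a ucp lift of $\pi$, which is a contradiction. Therefore $(\id_{n+2}\otimes\pi)_{|\cS}$ has no ucp lift.

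\textbf{Consequences.} It follows that $\cS$ lacks the LP, because the lifting problem above is posed for the quotient $M_{n+2}(\bofh)\to M_{n+2}(\qofh)$. The introduction's remarks, namely \cite[Proposition~7.4]{Ka14} together with the correspondence with $\cS_{\dot T}$, then turn $\cS$ into a counterexample to the Smith-Ward problem.
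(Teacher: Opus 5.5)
Your proposal is correct and follows the paper's argument: extend the putative lift by Arveson's theorem, use hyperrigidity to identify the quotient of the extension with $\id_{n+2}\otimes\pi$, and compress to the $(1,1)$ corner to get a lift of $\pi$; your direct-sum trick with faithful representations is a clean way to handle the possible non-injectivity of $\id_{n+2}\otimes\pi$. The one real difference is that you compress $\Psi(e_{1,1}\otimes a)$, which forces your (valid) unital-correction step, whereas the paper compresses $\Psi(I_{n+2}\otimes a)$ to the $(1,1)$ corner and so gets a ucp lift of $\pi$ immediately.
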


\begin{proof}
Suppose $\cA$ has a generating set of $n$ unitaries. We let $\cS \subseteq M_{n+2}(\cA)$ be the three-dimensional hyperrigid operator system from Theorem \ref{theorem: S hyperrigid in matrices over A}. Let $\rho:\bofh \to \qofh$ be the canonical quotient map and suppose that $\Psi:\cS \to M_{n+2}(\bofh)$ is a ucp map such that $(\id_{n+2} \otimes \rho) \circ \Psi=\varphi$, where $\varphi=(\id_{n+2} \otimes \pi)_{|\cS}$. Then by Arveson's extension theorem, we can extend $\Psi$ to a ucp map $\widetilde{\Psi}:M_{n+2}(\cA) \to M_{n+2}(\bofh)$. Then the map $\Phi=(\id_{n+2} \otimes \rho) \circ \widetilde{\Psi}:M_{n+2}(\cA) \to M_{n+2}(\qofh)$ is equal to $\varphi$ on $\cS$. By hyperrigidity, $\Phi=\id_{n+2} \otimes \pi$. But then the mapping $a \mapsto (e_{1,1} \otimes 1)\widetilde{\Psi}(I_{n+2} \otimes a)(e_{1,1} \otimes 1)$ is a ucp lift of $\pi$ to $\bofh$, a contradiction. Thus, $\cS$ does not have the LP.
\end{proof}

One immediate advantage of Corollary \ref{theorem: three-dimensional counterexamples to SWP} is that it avoids reliance on $\text{Ext}(C_r^*(\bF_2))$ not being a group, which is a significant result of Haagerup and Thorbj\o rnsen \cite{HT05}. Instead, one can appeal to a theorem of Anderson \cite{An78} from 1978 (the existence of a finitely unital $C^*$-algebra for which $\text{Ext}(\cA)$ is not a group). Interestingly, Anderson's theorem appeared before the Smith-Ward problem even originated from the paper of Smith and Ward \cite{SW80} in 1980.

We can also use Theorem \ref{theorem: S hyperrigid in matrices over A} to obtain counterexamples to the Smith-Ward problem with more peculiar properties--for example, a three-dimensional operator system that neither has LP nor is exact. Scherer's counterexample \cite{Sch26}, while not having LP, is a subsystem of $M_4(C_r^*(\bF_2))$. As $C_r^*(\bF_2)$ is exact \cite{Wa94}, so is $M_4(C_r^*(\bF_2))$, and exactness passes to operator subsystems \cite[Corollary~5.8]{KPTT13}, so Scherer's counterexample is exact. For that three-dimensional operator system $\cS$, Scherer's work shows that $\cS$ fails the LP \cite{Sch26}, while a theorem of Kavruk \cite[Theorem~6.6]{Ka14} proves that the operator system dual $\cS^d$ is not exact (but $\cS^d$ has LP since $\cS$ is exact).

Our main tool for finding a three-dimensional operator system that fails both the LP and exactness is the universal $C^*$-algebra of an operator system. Before exploring this avenue in earnest, it helps to note the low-dimensional cases first.  It is well-known that $\bC$ and $\bC^2$ are the only two operator systems of dimension $1$ and $2$, respectively, up to unital complete order isomorphism. For the sake of convenience to the reader, we include a sketch of the argument. We thank David Blecher for pointing this out to us.

\begin{proposition}
\label{proposition: one and two dimensional systems}
The only operator systems of dimension at most two, up to unital complete order isomorphism, are $\bC$ and $\bC^2$.
\end{proposition}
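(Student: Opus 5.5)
The one-dimensional case is immediate. A one-dimensional operator system is spanned by its unit $e$. The only positive elements at each matrix level are $P\otimes e$ with $P\geq 0$ in $M_n$, so the map $\lambda \mapsto \lambda e$ is a unital complete order isomorphism from $\bC$.

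For dimension two, I would work concretely in a unital $C^*$-algebra. By the Choi--Effros theorem, realize $\cS\subseteq \bofh$ as a concrete operator system with unit $I$. Since $\cS$ is self-adjoint and two-dimensional, it has a basis $\{I,h\}$ with $h=h^*$ not a scalar multiple of $I$: take any non-scalar element and, if necessary, its real or imaginary part. Let $K=\sigma(h)\subseteq \bR$, which has at least two points, and set $a=\min K<b=\max K$. The key observation is that $\cS\subseteq C^*(I,h)\cong C(K)$ is a subsystem of a commutative $C^*$-algebra. Positivity of $M_n$-level elements $A\otimes I+B\otimes h$ (with $A,B$ self-adjoint) is therefore tested pointwise on $K$: the element is positive iff $A+tB\geq 0$ for all $t\in K$.

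The map $t\mapsto A+tB$ is affine in $t$ and the PSD cone is convex. So it suffices to check $t=a$ and $t=b$, which gives $A+tB\geq 0$ on $[a,b]\supseteq K$. Consequently the map $\cS\to\bC^2$ defined by $\lambda I+\mu h\mapsto(\lambda+\mu a,\lambda+\mu b)$ has the following properties. It is unital. It is injective, since $a\neq b$ makes it a linear bijection. It is a complete order isomorphism, because both the matrix positive cones of $\cS$ and those of $\bC^2$ are described exactly by the conditions at $t=a$ and $t=b$.

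The only point needing any care is this reduction from the whole spectrum $K$ to its two endpoints via convexity. Once that is noted, everything else is linear algebra. Finally, $\bC$ and $\bC^2$ are clearly not isomorphic to each other, since they have different dimensions.
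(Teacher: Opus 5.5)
Your proof is correct and follows essentially the paper's route: view $\cS=\text{span}\{I,h\}$ inside $C^*(h)\cong C(\sigma(h))$, test positivity pointwise on the spectrum, and reduce to the two extreme points $\min\sigma(h)$ and $\max\sigma(h)$. The only difference is at the matrix levels. You check them directly using $M_n(C(K))=C(K,M_n)$ and convexity of $t\mapsto A+tB$, whereas the paper cites the fact that subsystems of commutative $C^*$-algebras carry the minimal (OMIN) structure \cite[Theorem~3.4]{PTT11} to reduce to the ground level. Your version is more self-contained, and your explicit map $\lambda I+\mu h\mapsto(\lambda+\mu a,\lambda+\mu b)$ makes rigorous the paper's step ``we may assume $\sigma(H)=\{r,s\}$''.
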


\begin{proof}
The one-dimensional case is trivial so we assume that $\cS\subseteq \bofh$ is a two-dimensional operator system. Then $\cS=\text{span}\{I,H\}$ for some self-adjoint contraction $H$ with $\|H\|=1$ and $\{I,H\}$ linearly independent. Then $C^*(\cS)=C^*(H) \simeq C(\sigma(H))$, the continuous functions on the spectrum of $H$, via the Gelfand transform. This mapping sends $I$ to the constant function $1$ and $H$ to the function $t$ on $\sigma(H)$. As $\cS$ is a subsystem of an abelian $C^*$-algebra, $\cS$ is a minimal operator system, in the sense that $\cS=\text{OMIN}(V)$ for some Archimedean order unit (AOU) space $V$ that is two-dimensional \cite[Theorem~3.4]{PTT11}. So we need only determine the structure of positive elements in $\cS$ rather than in matrices over $\cS$. A function $\alpha 1 + \beta t$ is positive in $\cS \subseteq C(\sigma(H))$ if and only if $\alpha+\beta t \geq 0$ for all $t \in \sigma(H)$. This inequality only depends on the extreme values for $t \in \sigma(H)$ (that is, when $|t|$ is maximized), so we may assume that $\sigma(H)=\{r,s\}$ where $-1 \leq r<s \leq 1$ and still have the same AOU space (and hence, the same minimal operator system up to unital complete order isomorphism). Note that since $\|H\|=1$, at least one of $r=-1$ or $s=1$ holds. But then it is easy to see that $C(\sigma(H))=\bC^2$, and since $\cS \subseteq C(\sigma(H))=\bC^2$ we must have $\cS=\bC^2$.  Thus, the only operator system of dimension $2$, up to complete order isomorphism, is $\bC^2$. 
\end{proof}

Thus, if $\cS$ is a one-dimensional or two-dimensional system, then it is $C^*$-nuclear and hence has the LP (this latter fact also appeared in \cite[Proposition~6.13]{Ka14}). In stark contrast, as soon as $\cT$ is an operator system with $\dim(\cT) \geq 3$, the universal $C^*$-algebra $C_u^*(\cT)$ is not exact \cite[Proposition~6.13]{Ka14}, which uses a result of Kirchberg and Wassermann that $C_u^*(\bC^3)$ is not exact \cite{KW98}. So, if we start with a finite-dimensional operator system $\cT$ without the LP (necessarily, $\dim(\cT) \geq 3$), then we can obtain a $3$-dimensional hyperrigid operator subsystem of $M_{n+2}(C_u^*(\cT))$ for some $n \in \bN$ that neither has the LP nor is exact (see Theorem \ref{theorem: three dimensional no LP no exactness}). This follows a similar argument to a previous work of the author \cite{Ha25}, but also relies on how hyperrigidity interacts with tensor products. 

First, we prove that for an operator system $\cS$ that is hyperrigid in a unital $C^*$-algebra $\cA$, exactness (respectively, LLP) of $\cS$ passes to $\cA$. To do this, we need to prove that hyperrigidity passes to certain tensor products. The analogous version of this for the minimal and essential operator system tensor products was proven in \cite[Proposition~6.2]{HK19}. We recall that the tensor product $\cS \otimes_{\text{el}} \cT$ of operator systems $\cS$ and $\cT$ is the operator system structure arising from the inclusion of $\cS \otimes \cT$ into $\cI(\cS) \otimes_{\max} \cT$, where $\cI(\cS)$ denotes the injective envelope of $\cS$. Similarly, the tensor product $\cS \otimes_{\er} \cT$ of operator systems $\cS$ and $\cT$ is the operator system structure arising from the inclusion of $\cS \otimes \cT$ into $\cS \otimes_{\max} \cI(\cT)$. We refer the reader to \cite{KPTT11} for details. One key property is that the $\text{el}$ tensor product is left injective, in the sense that if $\cS_1$ and $\cS_2$ are operator systems and $\cS_1 \subseteq \cS_2$, then the identity map $\text{id}:\cS_1 \otimes_{\text{el}} \cT \to \cS_2 \otimes_{\text{el}} \cT$ is a complete order embedding. (Similarly, the $\er$ tensor product is right injective.) In particular, if $\cA$ and $\cB$ are unital $C^*$-algebras and $\cA \subseteq \bofh$, then $\cA \otimes_{\text{el}} \cB$ is completely order isomorphic to the inclusion of $\cA \otimes \cB$ in $\bofh \otimes_{\max} \cB$, so that $\cA \otimes_{\text{el}} \cB$ is a $C^*$-algebra. (Similarly, if $\cB \subseteq \bofl$ for some Hilbert space $\cL$, then $\cA \otimes_{\er} \cB$ is completely order isomorphic to the inclusion of $\cA \otimes \cB$ in $\cA \otimes_{\max} \bofl$, so that $\cA \otimes_{\er} \cB$ is a $C^*$-algebra.)

\begin{proposition}
\label{proposition: hyperrigidity passing to el and er}
Let $\cA$ and $\cB$ be unital $C^*$-algebras and let $\cS \subseteq \cA$ and $\cT \subseteq \cB$ be operator systems. 
\begin{enumerate}
\item If $\cS$ is hyperrigid in $\cA$, then $\cS \otimes_{\el} \cB$ is hyperrigid in $\cA \otimes_{\el} \cB$. In particular, in this case we have $C_{\env}^*(\cS \otimes_{\el} \cB)=\cA \otimes_{\el} \cB$.
\item If $\cT$ is hyperrigid in $\cB$, then $\cA \otimes_{\er} \cT$ is hyperrigid in $\cA \otimes_{\er} \cB$. In particular, in this case we have $C_{\env}^*(\cA \otimes_{\er} \cT)=\cA \otimes_{\er} \cB$.
\end{enumerate}
\end{proposition}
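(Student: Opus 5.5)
We will prove (1); (2) follows by the symmetric argument, or by applying (1) after the flip isomorphism, since the flip carries $\cS \otimes_{\er} \cT$ to $\cT \otimes_{\el} \cS$. The plan has three steps: reduce hyperrigidity to a unique extension property for $*$-homomorphisms, use a maximal-tensor-product factorization to decompose a representation of $\cA \otimes_{\el} \cB$, and then apply hyperrigidity of $\cS$ in $\cA$ on the first tensor leg.

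\textbf{Step 1 (reduction).} By the discussion before the statement, $\cA \otimes_{\el} \cB$ is a $C^*$-algebra. Concretely, fix a faithful $\cA \subseteq \bofl$; then $\cA \otimes_{\el} \cB$ is the closure of $\cA \otimes \cB$ inside $\bofl \otimes_{\max} \cB$. It is generated as a $C^*$-algebra by $\cS \otimes 1$ and $1 \otimes \cB$, since $C^*(\cS)=\cA$ by hyperrigidity and the elementary tensors $a\otimes b=(a\otimes 1)(1\otimes b)$ are dense. As in the proof of Theorem \ref{theorem: S hyperrigid in matrices over A}, we use Arveson's characterization \cite{Ar11}. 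It suffices to show: for every unital $*$-homomorphism $\sigma:\cA \otimes_{\el} \cB \to \bofh$ and every ucp map $\Phi:\cA \otimes_{\el} \cB \to \bofh$ with $\Phi=\sigma$ on $\cS \otimes_{\el} \cB$, we have $\Phi=\sigma$.

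\textbf{Step 2 (multiplicative domain).} Since $\Phi$ agrees with the $*$-homomorphism $\sigma$ on $1 \otimes \cB$, and $1 \otimes \cB$ is a $C^*$-subalgebra, Choi's multiplicative domain theorem shows that $1 \otimes \cB$ lies in the multiplicative domain of $\Phi$. Hence
\[\Phi(x(1\otimes b))=\Phi(x)\sigma(1\otimes b) \quad \text{and} \quad \Phi((1\otimes b)x)=\sigma(1\otimes b)\Phi(x)\]
for all $x$ and $b$. Define $\sigma_\cA(a)=\sigma(a\otimes 1)$ and $\Phi_\cA(a)=\Phi(a\otimes 1)$. Then $\sigma_\cA$ is a unital $*$-homomorphism, $\Phi_\cA$ is ucp, and $\Phi_\cA=\sigma_\cA$ on $\cS$. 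Moreover, $\Phi_\cA(a)$ commutes with $\sigma(1\otimes \cB)$, because $(a\otimes 1)$ commutes with $(1\otimes b)$ and both products lie in the multiplicative-domain relations above.

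\textbf{Step 3 (apply hyperrigidity of $\cS$).} Hyperrigidity of $\cS$ in $\cA$ applied to $\sigma_\cA$ and $\Phi_\cA$ gives $\Phi_\cA=\sigma_\cA$. Then
\[\Phi(a\otimes b)=\Phi_\cA(a)\,\sigma(1\otimes b)=\sigma(a\otimes b),\]
so $\Phi=\sigma$ on the dense algebraic tensor product, and by continuity everywhere.

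Notice that this argument never used any property of $\el$ beyond its being a $C^*$-algebra containing $\cA\otimes 1$ and $1\otimes\cB$ as commuting $C^*$-subalgebras. If Arveson's reduction needs only unital $*$-homomorphisms of $\cA \otimes_{\el} \cB$, there is no real obstacle. The one point to double-check is that the definition of hyperrigidity being used, the unique extension property, is quantified over representations of the ambient algebra $\cA \otimes_{\el} \cB$. It is, in the form stated in the introduction.

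The envelope claim then follows. Hyperrigidity means every irreducible representation of $\cA \otimes_{\el} \cB$ restricts to $\cS\otimes_{\el}\cB$ with the unique extension property, so every such representation is a boundary representation. Hence the Šilov ideal is zero. Since $\cS \otimes_{\el} \cB$ generates $\cA \otimes_{\el} \cB$, we get $C^*_{\env}(\cS\otimes_{\el}\cB)=\cA\otimes_{\el}\cB$. By left injectivity of $\el$, the operator system structure on $\cS\otimes_{\el}\cB$ is the one inherited from $\cA\otimes_{\el}\cB$, so the two descriptions of $\cS \otimes_{\el} \cB$ agree.

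The main obstacle is this last boundary-representation deduction for nonseparable algebras. If needed, I would instead argue directly: any ucp map extending the identity of $\cS\otimes_{\el}\cB$ is the identity by Steps 1--3, and then invoke the Arveson--Hamana characterization of the $C^*$-envelope via the unique extension property.
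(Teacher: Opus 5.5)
Your proof is correct and follows essentially the same route as the paper: restrict $\Phi$ to $\cA\otimes 1$ and use hyperrigidity of $\cS$ in $\cA$ to get $\Phi=\sigma$ there, place $1\otimes\cB$ in the multiplicative domain, and conclude on elementary tensors. For the envelope claim the paper just cites \cite[Theorem~3.10]{HK19}; your worry about nonseparability is unnecessary, because showing that the \v{S}ilov ideal lies in the kernel of any representation whose restriction has the unique extension property uses only that property directly, with no existence theorem for boundary representations.
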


\begin{proof}
We only prove (1), as the proof of (2) is similar. We note that by left injectivity $\cS \otimes_{\el} \cB$ is an operator subsystem of $\cA \otimes_{\el} \cB$. Let $\pi:\cA \otimes_{\el} \cB \to \bofh$ be a unital $*$-homomorphism and let $\varphi$ be the restriction of $\pi$ to $\cS \otimes_{\text{el}} \cB$. Let $\Phi:\cA \otimes_{\el} \cB \to \bofh$ be a ucp map extending $\varphi$. We identify $\cS$ and $\cA$ with $\cS \otimes 1$ and $\cA \otimes 1$, respectively, in $\cA \otimes_{\text{el}} \cB$, and we identify $\cB$ with $1 \otimes \cB$ in $\cA \otimes_{\text{el}} \cB$. Then $\Phi_{|\cA}$ is a ucp extension of $\pi_{|\cS}$, so by hyperrigidity of $\cS$ in $\cA$, we have $\Phi_{|\cA}=\pi_{|\cA}$. This implies that $\cA \otimes 1$ belongs to the multiplicative domain of $\Phi$. It is easy to see that $1 \otimes \cB$ is also in the multiplicative domain of $\Phi$, so for each $a \in \cA$ and $b \in \cB$ we must have
\[ \Phi(a \otimes b)=\Phi(a \otimes 1)\Phi(1 \otimes b)=\pi(a \otimes 1)\pi(1 \otimes b)=\pi(a \otimes b).\]
Extending by linearity and continuity we see that $\Phi=\pi$. Thus, $\cS \otimes_{\el} \cB$ is hyperrigid in $\cA \otimes_{\el} \cB$. The claim about the $C^*$-envelope then follows since, whenever $\mathcal{C}$ is a unital $C^*$-algebra and $\cR \subseteq \mathcal{C}$ is a hyperrigid operator system, then $C_{\env}^*(\cR)=\mathcal{C}$ (see \cite[Theorem~3.10]{HK19} for a proof).
\end{proof}

The next proposition is a slight generalization of work in \cite{KPTT13} where it is assumed that $\cS \subseteq \cA$ contains enough unitaries, which automatically forces $\cS$ to be hyperrigid in $\cA$ (see \cite[Lemma~3.11]{HK19}). 

\begin{proposition}
\label{proposition: hyperrigidity passes exactness and LLP}
Suppose that $\cS$ is an operator system contained in a unital $C^*$-algebra $\cA$ and suppose that $\cS$ is hyperrigid in $\cA$. 
\begin{enumerate}
\item If $\cS$ is exact, then $\cA$ is exact.
\item If $\cS$ has the LLP, then $\cA$ has the LLP.
\end{enumerate}
\end{proposition}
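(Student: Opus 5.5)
We plan to use the tensor-product characterizations of exactness and the LLP for operator systems from \cite{KPTT13}, together with Proposition \ref{proposition: hyperrigidity passing to el and er} and the fact that a unital complete order isomorphism between operator systems extends to a $*$-isomorphism of their $C^*$-envelopes.

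\textbf{Characterizations to use.} By \cite{KPTT13}, an operator system $\cR$ is exact if and only if $\cR \otimes_{\min} \cB = \cR \otimes_{\el} \cB$ for every unital $C^*$-algebra $\cB$. Moreover, $\cR$ has the LLP if and only if $\cR \otimes_{\min} \bofl = \cR \otimes_{\max} \bofl$ for $\cL=\ell^2$; equivalently, by injectivity of $\bofl$, if and only if $\cR \otimes_{\min} \bofl = \cR \otimes_{\er} \bofl$. For a unital $C^*$-algebra, these operator system notions agree with the usual $C^*$-notions of exactness and the LLP.

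\textbf{Part (1).} Suppose $\cS$ is exact and let $\cB$ be a unital $C^*$-algebra.
\begin{enumerate}
\item By exactness, $\id:\cS \otimes_{\min} \cB \to \cS \otimes_{\el} \cB$ is a unital complete order isomorphism.
\item By Proposition \ref{proposition: hyperrigidity passing to el and er}(1), $C^*_{\env}(\cS \otimes_{\el} \cB)=\cA \otimes_{\el} \cB$, which is a $C^*$-algebra by the preceding discussion.
\item Since the minimal tensor product is injective, $\cS \otimes_{\min} \cB \subseteq \cA \otimes_{\min} \cB$. This inclusion is hyperrigid by \cite[Proposition~6.2]{HK19}, so $C^*_{\env}(\cS \otimes_{\min} \cB)=\cA \otimes_{\min} \cB$ by \cite[Theorem~3.10]{HK19}.
\item By the universal property of the $C^*$-envelope, the isomorphism in step 1 extends to a $*$-isomorphism $\cA \otimes_{\min} \cB \to \cA \otimes_{\el} \cB$. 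This map is the identity on $\cS \odot \cB$.
\end{enumerate}
The elements $s \otimes b$ and their products generate $\cA \odot \cB$, because $\cS$ generates $\cA$ and $1 \otimes \cB \subseteq \cS \otimes \cB$. Hence the extension is the identity on $\cA \odot \cB$. It follows that $\cA \otimes_{\min} \cB = \cA \otimes_{\el} \cB$ for every $\cB$, so $\cA$ is exact.

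\textbf{Part (2).} The argument is identical, with $\cB=\bofl$ and the $\er$ tensor product in place of $\el$, using Proposition \ref{proposition: hyperrigidity passing to el and er}(2). Suppose $\cS$ has the LLP.
\begin{enumerate}
\item We have $\cS \otimes_{\min} \bofl = \cS \otimes_{\er} \bofl$.
\item Here $\cS$ sits in the left factor, so we need the left-factor analogue of Proposition \ref{proposition: hyperrigidity passing to el and er}(2). By symmetry of $\min$ and $\max$ under the flip, $\cS \otimes_{\er} \bofl \cong \bofl \otimes_{\el} \cS$. Then Proposition \ref{proposition: hyperrigidity passing to el and er}(2) with the factors swapped gives $C^*_{\env}(\cS\otimes_{\er}\bofl)=\cA\otimes_{\er}\bofl$.
\item Since $\bofl$ is injective, $\cA \otimes_{\er}\bofl=\cA\otimes_{\max}\bofl$.
\item The same envelope-uniqueness argument as in part (1) gives $\cA\otimes_{\min}\bofl=\cA\otimes_{\max}\bofl$.
\end{enumerate}
By Kirchberg's characterization, $\cA$ then has the LLP.

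\textbf{Main obstacle.} The delicate step is bookkeeping which factor is injective. The $\er$ tensor product embeds via the injective envelope of the right factor, so when $\cS$ sits on the left, $\cS \otimes_{\er}\bofl$ is naturally $\cS\otimes_{\max}\bofl$. That is exactly the tensor product we need, which is convenient. However, left-factor hyperrigidity for $\er$ is not literally stated in Proposition \ref{proposition: hyperrigidity passing to el and er}. If the flip argument in step 2 of part (2) feels unsafe, we would instead reprove it directly via the multiplicative-domain argument. Given a unital $*$-homomorphism $\pi$ of $\cA\otimes_{\max}\bofl$ and a ucp extension $\Phi$ of $\pi_{|\cS\otimes\bofl}$, the restriction $\Phi_{|\cA\otimes 1}$ extends $\pi_{|\cS\otimes 1}$ and so equals $\pi_{|\cA\otimes 1}$ by hyperrigidity of $\cS$ in $\cA$. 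Also $\Phi$ agrees with $\pi$ on $1\otimes\bofl \subseteq \cS\otimes\bofl$. Hence both $\cA\otimes 1$ and $1\otimes\bofl$ lie in the multiplicative domain of $\Phi$, which forces $\Phi=\pi$.
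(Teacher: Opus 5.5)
Your Part (1) is the paper's argument, and you spell out why the envelope isomorphism is the identity on $\cA\odot\cB$, which the paper leaves implicit. Part (2), however, has a genuine gap at step 2.

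The assertion $C^*_{\env}(\cS\otimes_{\er}\bofl)=\cA\otimes_{\er}\bofl$ presupposes that $\cS\otimes_{\er}\bofl=\cS\otimes_{\max}\bofl$ sits completely order isomorphically inside $\cA\otimes_{\er}\bofl=\cA\otimes_{\max}\bofl$. Nothing you invoke gives this: $\er$ is injective only in the right slot, $\cS$ occupies the left one, and $\max$ is not injective. The flip does not help. It turns the needed statement into one about a hyperrigid system in the \emph{right} slot of $\el$. But parts (1) and (2) of Proposition~\ref{proposition: hyperrigidity passing to el and er} are flips of each other, and both concern the injective slot.

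Your multiplicative-domain argument is correct, but it only proves hyperrigidity of $\cS\odot\bofl$ with the order \emph{inherited from} $\cA\otimes_{\max}\bofl$. It never identifies that order with $\cS\otimes_{\max}\bofl$. In fact the general claim ``$\cS$ hyperrigid in $\cA$ implies $C^*_{\env}(\cS\otimes_{\er}\bofl)=\cA\otimes_{\er}\bofl$'' is false, and your justification of step 2 never uses the LLP. Take $\cS=\cW_{3,2}$, which is hyperrigid in its finite-dimensional, hence nuclear, envelope. The inherited order on $\cW_{3,2}\odot\bofl$ is then the minimal one, while $\cW_{3,2}\otimes_{\min}\bofl\neq\cW_{3,2}\otimes_{\max}\bofl$ because $\cW_{3,2}$ fails the LP.

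The fix is to use step 1 inside step 2. By functoriality of $\max$ and injectivity of $\min$, the inherited cones on $M_k(\cS\odot\bofl)$ lie between the $\max$-cones and the $\min$-cones of $\cS\otimes\bofl$. These coincide by the LLP, so the inherited structure equals $\cS\otimes_{\min}\bofl$. Your envelope comparison then works: use your multiplicative-domain hyperrigidity on the $\max$ side and the $\min$-side hyperrigidity from part (1). This gives $\cA\otimes_{\min}\bofl=\cA\otimes_{\max}\bofl$.

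Alternatively, you can avoid envelopes. By the LLP, the identity map $\cS\otimes_{\min}\bofl=\cS\otimes_{\max}\bofl\to\cA\otimes_{\max}\bofl$ is ucp. Extend it by Arveson to $\cA\otimes_{\min}\bofl$. Hyperrigidity of $\cS$ in $\cA$ plus the multiplicative-domain argument show the extension is the identity on $\cA\odot\bofl$, so $\min=\max$ on $\cA\otimes\bofl$.

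The paper's own proof of (2) is just ``similarly, using right injectivity of the $\er$ tensor product.'' That passes over the same left/right issue, since right injectivity concerns the slot $\cS$ does not occupy. The squeeze via the LLP is what actually makes (2) go through.
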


\begin{proof}
We prove (1). Assume that $\cS$ is exact, so that $\cS$ is $(\min,\el)$-nuclear by \cite[Theorem~5.7]{KPTT13}. Let $\cB$ be a unital $C^*$-algebra. By Proposition \ref{proposition: hyperrigidity passing to el and er}, since $\cS$ is hyperrigid in $\cA$, we have that  $C_{\text{env}}^*(\cS \otimes_{\text{el}} \cB)=\cA \otimes_{\text{el}} \cB$. On the other hand, $\cS \otimes_{\min} \cB$ is also hyperrigid in $\cA \otimes_{\min} \cB$ by the proof of \cite[Proposition~6.2]{HK19}, so by \cite[Theorem~3.10]{HK19}, $C_{\env}^*(\cS \otimes_{\min} \cB)=\cA \otimes_{\min}\cB$. Since $\cS$ is $(\min,\el)$-nuclear, we have $\cS \otimes_{\min} \cB=\cS \otimes_{\el} \cB$, so passing to $C^*$-envelopes shows that
\[ \cA \otimes_{\min} \cB=\cA \otimes_{\el} \cB.\]
Thus, $\cA$ is $C^*$-$(\min,\el)$-nuclear, which is equivalent to exactness by \cite[Theorem~5.7]{KPTT13}.

The proof of (2) follows similarly, using right injectivity of the $\er$ tensor product and the equivalence of the LLP, $C^*$-$(\min,\er)$-nuclearity and $(\min,\er)$-nuclearity \cite[Theorem~8.1]{KPTT13}.
\end{proof}

\begin{corollary}
\label{corollary: non LLP yields SWP counterexample}
If $\cA$ is a finitely generated unital $C^*$-algebra without the LLP, then there is an $n \in \bN$ and a three-dimensional operator subsystem $\cS$ of $M_{n+2}(\cA)$ that does not have the LP (and hence is a counterexample to the Smith-Ward problem).
\end{corollary}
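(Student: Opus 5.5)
We argue by contradiction, combining Theorem \ref{theorem: S hyperrigid in matrices over A} with Proposition \ref{proposition: hyperrigidity passes exactness and LLP}(2). Since $\cA$ is finitely generated, choose $n$ unitaries $u_1,\dots,u_n$ generating $\cA$. Every element of a unital $C^*$-algebra is a linear combination of four unitaries, so a finite generating set can always be replaced by a finite set of unitaries. Let $\cS=\text{span}\{I,D,G\}\subseteq M_{n+2}(\cA)$ be the operator system built from these unitaries. By Theorem \ref{theorem: S hyperrigid in matrices over A}, $\cS$ is a three-dimensional operator system that is hyperrigid in $M_{n+2}(\cA)$.

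Now suppose, toward a contradiction, that $\cS$ has the LP. For finite-dimensional operator systems the LP and the LLP coincide, since the local condition is vacuous when the system is finite-dimensional. Hence $\cS$ has the LLP. Proposition \ref{proposition: hyperrigidity passes exactness and LLP}(2), applied to the hyperrigid inclusion $\cS\subseteq M_{n+2}(\cA)$, then gives that $M_{n+2}(\cA)$ has the LLP.

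The remaining step is to descend from $M_{n+2}(\cA)$ to $\cA$, which contradicts the hypothesis. Use the characterization of the LLP as $C^*$-$(\min,\er)$-nuclearity, equivalently $\cA\otimes_{\min}\bofl=\cA\otimes_{\max}\bofl$ by Kirchberg. We have
\[ M_{n+2}(\cA)\otimes_{\max}\bofl\cong M_{n+2}(\cA\otimes_{\max}\bofl), \]
and the analogous identity holds for $\min$. Equality of the norms on $M_{n+2}(\cA)\otimes\bofl$ therefore restricts, via the corner $e_{1,1}\otimes\cA$, to equality on $\cA\otimes\bofl$. Alternatively, the LLP passes to unital $C^*$-subalgebras admitting a conditional expectation, and the map $x\mapsto (e_{1,1}\otimes 1)x(e_{1,1}\otimes 1)$ is such a ucp projection onto a copy of $\cA$. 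Either route yields that $\cA$ has the LLP, a contradiction. So $\cS$ fails the LP.

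Finally, as explained in the introduction, a three-dimensional operator system without the LP yields, via \cite[Proposition~7.4]{Ka14}, a ucp map into $\qofh$ with no ucp lift to $\bofh$. Its image is of the form $\cS_{\dot T}$, which gives the counterexample to the Smith-Ward problem. The main thing to check carefully is the LP/LLP equivalence for finite-dimensional systems and the descent from matrices to $\cA$. Both are standard; neither presents a real obstacle.
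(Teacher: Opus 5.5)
Your proof is correct and takes the paper's approach: build the hyperrigid $\cS=\text{span}\{I,D,G\}\subseteq M_{n+2}(\cA)$ from Theorem~\ref{theorem: S hyperrigid in matrices over A} and apply Proposition~\ref{proposition: hyperrigidity passes exactness and LLP}(2). You also correctly spell out two steps the paper leaves implicit: LP $\Rightarrow$ LLP for $\cS$, and the descent of the LLP from $M_{n+2}(\cA)$ to $\cA$ via $M_{n+2}(\cA)\otimes_{\alpha}\bofl\cong M_{n+2}(\cA\otimes_{\alpha}\bofl)$ for $\alpha=\min,\max$. One small fix to your alternative route: the compression onto the corner $e_{1,1}\otimes\cA$ does not land in a unital subalgebra, so use instead the unital embedding $a\mapsto I_{n+2}\otimes a$ with the ucp left inverse $[a_{ij}]\mapsto a_{11}$.
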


\begin{proof}
There is some $n \in \bN$ for which $\cA$ is generated by $n$ unitaries. By Theorem \ref{theorem: S hyperrigid in matrices over A}, there is a hyperrigid operator system $\cS \subseteq M_{n+2}(\cA)$ with $\dim(\cS)=3$. By Proposition \ref{proposition: hyperrigidity passes exactness and LLP}, if $\cS$ had the LP then $\cA$ would have the LLP, a contradiction. Thus, $\cS$ does not have the LP.
\end{proof}

\begin{remark}
We note that the converse of (1) in Proposition \ref{proposition: hyperrigidity passes exactness and LLP} also holds, but this does not require hyperrigidity, since exactness passes to subsystems \cite[Corollary~5.8]{KPTT13}. On the other hand, the converse of (2) in Proposition \ref{proposition: hyperrigidity passes exactness and LLP} is false even with hyperrigidity, since the operator system $\cW_{3,2}$ (which is the dual of the operator system $NC(3)$ spanned by $1$ and the canonical generators in $C^*(*_3 \bZ_2)$) is hyperrigid in its $C^*$-envelope \cite[Theorem~6.11]{HK19} but does not have the LP, even though $C_{\env}^*(\cW_{3,2})$ is nuclear, since the dual operator system $NC(3)$ of $\cW_{3,2}$ is not exact (see \cite{Ka14}).
\end{remark}

We close this section with the following theorem, which further exhibits the ubiquity of non-LP three-dimensional operator systems. Something that is new, though, is that these can also fail exactness simultaneously.

\begin{theorem}
\label{theorem: three dimensional no LP no exactness}
Let $\cT$ be a finite-dimensional operator system without the LP. Then there exists an $n \in \bN$ with $n \leq 2( \dim(\cT)-1)$ and a three-dimensional operator subsystem $\cS \subseteq M_{n+2}(C_u^*(\cT))$ for which the following hold:
\begin{itemize}
\item $\cS$ is hyperrigid in $M_{n+2}(C_u^*(\cT))$.
\item $\cS$ does not have the LP.
\item $\cS$ is not exact.
\end{itemize}
\end{theorem}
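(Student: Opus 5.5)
We apply Theorem \ref{theorem: S hyperrigid in matrices over A} with $\cA=C_u^*(\cT)$ and then transfer the failure of the LP and of exactness from $C_u^*(\cT)$ to $\cS$ using Proposition \ref{proposition: hyperrigidity passes exactness and LLP}.

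\textbf{Generators and the bound on $n$.} Let $d=\dim(\cT)$ and pick a basis $1,h_1,\dots,h_{d-1}$ of $\cT$ consisting of self-adjoint elements, scaled so that $\|h_k\|<1$. Each $h_k$ is a self-adjoint contraction and so is a real combination of two unitaries, for instance
\[ h_k=\tfrac{1}{2}(w_k+w_k^*), \qquad w_k=h_k+i(1-h_k^2)^{1/2}. \]
Since $C_u^*(\cT)$ is generated by $\cT$, the $2(d-1)$ unitaries $w_k,w_k^*$ generate $C_u^*(\cT)$. We may even keep only the $w_k$, since $w_k^*$ already lies in the $C^*$-algebra they generate. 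Thus $C_u^*(\cT)$ is generated by $n\le 2(d-1)$ unitaries. Theorem \ref{theorem: S hyperrigid in matrices over A} then produces a three-dimensional operator system $\cS\subseteq M_{n+2}(C_u^*(\cT))$ that is hyperrigid, which is the first bullet.

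\textbf{Failure of the LP.} Suppose $\cS$ had the LP. Because $\cS$ is finite-dimensional, the LP implies the LLP, so Proposition \ref{proposition: hyperrigidity passes exactness and LLP}(2) would give that $M_{n+2}(C_u^*(\cT))$ has the LLP. The LLP passes to corners, so $C_u^*(\cT)$ would have the LLP. For a finite-dimensional $\cT$, the LLP of $C_u^*(\cT)$ yields the LP of $\cT$, by Kavruk's results in \cite{Ka14} (cf. \cite{Ha25}). The mechanism is that a ucp map $\cT\to\cB/\cI$ extends to a $*$-homomorphism $C_u^*(\cT)\to\cB/\cI$. The LLP then gives a ucp lift on the finite-dimensional subsystem $\cT$, and for finite-dimensional systems the LLP and the LP coincide. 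This contradicts the hypothesis on $\cT$, which gives the second bullet. The step I would check most carefully is exactly this chain: that LLP of $C_u^*(\cT)$ gives LP of $\cT$ via the universal property together with ``LLP $=$ LP in finite dimensions'', and that the LLP passes from $M_{n+2}(\cA)$ to $\cA$.

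\textbf{Failure of exactness.} The hypothesis already forces $\dim(\cT)\ge 3$, since by Proposition \ref{proposition: one and two dimensional systems} every operator system of dimension at most two has the LP. Hence $C_u^*(\cT)$ is not exact by \cite[Proposition~6.13]{Ka14}. Exactness passes to the corner $e_{1,1}\otimes C_u^*(\cT)$, so $M_{n+2}(C_u^*(\cT))$ is not exact either. If $\cS$ were exact, Proposition \ref{proposition: hyperrigidity passes exactness and LLP}(1) would make $M_{n+2}(C_u^*(\cT))$ exact, a contradiction. This gives the third bullet.
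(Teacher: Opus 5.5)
Your proof is correct. The hyperrigidity and non-exactness parts coincide with the paper's, but you obtain the failure of the LP by a different mechanism.

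The paper takes a non-liftable ucp map $\varphi:\cT\to\qofh$ from \cite[Proposition~7.4]{Ka14}, extends it to a unital $*$-homomorphism $\pi:C_u^*(\cT)\to\qofh$, and applies Theorem~\ref{theorem: three-dimensional counterexamples to SWP}. That theorem has an elementary proof: extend a putative lift by Arveson's theorem, let hyperrigidity identify its image in $M_{n+2}(\qofh)$ with $\id_{n+2}\otimes\pi$, and compress to a corner to lift $\pi$.

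You instead argue contrapositively through Proposition~\ref{proposition: hyperrigidity passes exactness and LLP}(2). LP of $\cS$ gives LLP of $M_{n+2}(C_u^*(\cT))$, hence of the corner $C_u^*(\cT)$, hence, by the universal property, LP of $\cT$. This is Corollary~\ref{corollary: non LLP yields SWP counterexample} applied to $C_u^*(\cT)$; the paper's last sentence gestures at it too.

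Your version has three advantages: it handles the LP and exactness bullets by the same transfer principle, it works with arbitrary quotients $\cB/\cI$ rather than needing the reduction to the Calkin algebra, and it makes explicit the passage between $M_{n+2}(\cA)$ and $\cA$ that the paper leaves tacit. The paper's version, for its part, avoids the $\er$-tensor/LLP machinery behind Proposition~\ref{proposition: hyperrigidity passes exactness and LLP}(2). It also exhibits a concrete non-liftable map $\cS\to M_{n+2}(\qofh)$, i.e.\ an explicit Smith--Ward operator.

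Finally, your observation that the $w_k$ alone generate $C_u^*(\cT)$ gives the sharper bound $n\le\dim(\cT)-1$, so $n\le 3$ in Theorem~\ref{theorem: three-dimensional nuclearity detector}. The paper's count of two unitaries per self-adjoint generator in effect counts $w_k$ and $w_k^*$ separately.
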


\begin{proof}
Note that any self-adjoint element of a unital $C^*$-algebra is a linear combination of at most two unitaries from that $C^*$-algebra. So, starting with a basis $\{I,H_1,...,H_m\}$ for $\cT$ where $m=\dim(\cT)-1$ and each $H_i$ self-adjoint, each element of $\{H_1,...,H_m\}$ is a linear combination of at most two unitaries in $C_u^*(\cT)$, and these elements generate $C_u^*(\cT)$. Thus, $C_u^*(\cT)$ is generated by $n$ unitaries for some $n \leq 2(\dim(\cT)-1)$. Since $\cT$ does not have the LP, there is a ucp map $\varphi:\cT \to \qofh$ without a ucp lift to $\bofh$ by \cite[Proposition~7.4]{Ka14}. Thus, by the universal property of $C_u^*(\cT)$, there is a unital $*$-homomorphism $\pi:C_u^*(\cT) \to \qofh$ such that $\pi_{|\cT}=\varphi$. Since $\varphi$ has no ucp lift to $\bofh$, neither does $\pi$. Then applying Corollary \ref{theorem: three-dimensional counterexamples to SWP}, there is a three-dimensional operator system $\cS \subseteq M_{n+2}(C_u^*(\cT))$ that is hyperrigid, and in particular fails the LP and fails exactness by Proposition \ref{proposition: hyperrigidity passes exactness and LLP}, since $C_u^*(\cT)$ has neither the LLP nor exactness by the above. This completes the proof.
\end{proof}

\section{A three-dimensional nuclearity detector}\label{section: nuclearity detector}

In this last section, we use Theorem \ref{theorem: three dimensional no LP no exactness} to also prove the existence of a three-dimensional operator system $\cS$ that detects nuclearity for unital $C^*$-algebras. Given an operator system $\cR$, we say that $\cR$ is a \textbf{nuclearity detector} if, for any unital $C^*$-algebra $\cA$, one has that
\[ \cR \otimes_{\min} \cA=\cR \otimes_{\max} \cA \iff \cA \text{ is nuclear}.\]
Note that if $\cA$ is nuclear, then it is $(\min,\max)$-nuclear as an operator system \cite[Remark~5.15]{KPTT11}, so that $\cR \otimes_{\min} \cA = \cR \otimes_{\max} \cA$ automatically. Thus, we only ever worry about showing that, if $\cR \otimes_{\min} \cA=\cR \otimes_{\max} \cA$, then $\cA$ is nuclear. 

Kavruk proved \cite[Theorem~0.3]{Ka15} that the four-dimensional operator system
\[ \cW_{3,2}=\left\{ \begin{pmatrix} a & b \\ b & a \\ & & a & c \\ & & c & a \\ & & & & a & d \\ & & & & d & a \end{pmatrix}: a,b,c,d \in \bC \right\} \subseteq M_6\]
is a nuclearity detector. To work towards a three-dimensional nuclearity detector, we first note that $C_u^*(\cW_{3,2})$ enjoys this property.

\begin{proposition}
\label{proposition: universal of w32 is a nuclearity detector}
The universal $C^*$-algebra $C_u^*(\cW_{3,2})$ of $\cW_{3,2}$ is a nuclearity detector.
\end{proposition}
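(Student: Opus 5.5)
We need to show that if $C_u^*(\cW_{3,2}) \otimes_{\min} \cA = C_u^*(\cW_{3,2}) \otimes_{\max} \cA$ for a unital $C^*$-algebra $\cA$, then $\cA$ is nuclear. By Kavruk's theorem \cite[Theorem~0.3]{Ka15} it suffices to prove $\cW_{3,2} \otimes_{\min} \cA = \cW_{3,2} \otimes_{\max} \cA$. So the plan is to pass the min $=$ max equality from the $C^*$-algebra $C_u^*(\cW_{3,2})$ down to the subsystem $\cW_{3,2}$.

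The minimal side is easy. The min tensor product is injective, so $\cW_{3,2} \otimes_{\min} \cA \subseteq C_u^*(\cW_{3,2}) \otimes_{\min} \cA$ completely order isomorphically.

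The key step is the maximal side. I claim that $\cW_{3,2} \otimes_{\max} \cA \subseteq C_u^*(\cW_{3,2}) \otimes_{\max} \cA$ is a complete order embedding. This is where the universal property is used. The inclusion is ucp, so it suffices to show that every unital completely positive map on $\cW_{3,2} \otimes_{\max} \cA$ extends to a ucp map on $C_u^*(\cW_{3,2}) \otimes_{\max} \cA$. More concretely, by the description of max via commuting ucp pairs from \cite{KPTT11}, the matrix-positive cones of $\cW_{3,2} \otimes_{\max} \cA$ are detected by maps $\varphi \cdot \pi$. Here $\varphi:\cW_{3,2} \to \bofh$ is ucp, $\pi:\cA \to \bofh$ is a unital $*$-homomorphism, and the two have commuting ranges. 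One could instead take $\pi$ merely ucp, but when one factor is a $C^*$-algebra a $*$-homomorphism suffices, using \cite[Theorem~5.12]{KPTT11}.

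Given such a pair, I would restrict $\varphi$ to take values in the commutant $\pi(\cA)'$, which is a $C^*$-algebra. By the universal property of $C_u^*(\cW_{3,2})$, $\varphi$ then extends to a unital $*$-homomorphism $\tilde\varphi: C_u^*(\cW_{3,2}) \to \pi(\cA)'$. Now $\tilde\varphi$ and $\pi$ are $*$-homomorphisms with commuting ranges, so they induce a $*$-homomorphism $\tilde\varphi \times \pi$ on $C_u^*(\cW_{3,2}) \otimes_{\max} \cA$ that restricts to $\varphi\cdot\pi$. Consequently, a matrix over $\cW_{3,2}\otimes\cA$ that is positive in $C_u^*(\cW_{3,2})\otimes_{\max}\cA$ is positive under every $\varphi\cdot\pi$, and hence is positive in $\cW_{3,2}\otimes_{\max}\cA$. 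This gives the embedding.

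Combining the two embeddings with the hypothesis gives $\cW_{3,2}\otimes_{\min}\cA=\cW_{3,2}\otimes_{\max}\cA$ as operator systems, so $\cA$ is nuclear by Kavruk. The converse direction is automatic, as noted after the definition.

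The main obstacle is the max-embedding step. Specifically, one must verify that the max cone on $\cS\otimes\cA$, for $\cA$ a $C^*$-algebra, is determined by commuting pairs in which the $\cA$-side is a $*$-homomorphism, so that the commutant trick applies. I would cite \cite{KPTT11}, which shows that $\cS\otimes_{\max}\cA$ embeds in $C_u^*(\cS)\otimes_{\max}\cA$, instead of reproving it. An alternative route is the Stinespring/commutant lifting argument: dilate the $\cA$-side map to a representation, and use that a ucp map commuting with the range of a ucp map lifts to the commutant of the dilation.
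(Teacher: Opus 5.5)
Your proof is correct and is essentially the paper's argument. Both proofs pass $\min$ down by injectivity and identify the structure that $\cW_{3,2}\otimes\cA$ inherits from $C_u^*(\cW_{3,2})\otimes_{\max}\cA$ with $\cW_{3,2}\otimes_{\max}\cA$; the only difference is that the paper cites \cite[Proposition~3.4]{Ka14} (the inherited structure is $\cW_{3,2}\otimes_c\cA$), while you reprove this via the commutant/universal-property trick, and your claim that the max cones are detected by commuting pairs $\varphi\cdot\pi$ is exactly the second fact the paper invokes, namely $\cW_{3,2}\otimes_c\cA=\cW_{3,2}\otimes_{\max}\cA$ for a $C^*$-algebra $\cA$ \cite[Theorem~6.7]{KPTT11}.
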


\begin{proof}
Suppose that $\cA$ is a unital $C^*$-algebra and that $C_u^*(\cW_{3,2}) \otimes_{\min} \cA = C_u^*(\cW_{3,2}) \otimes_{\max} \cA$. By injectivity of the minimal tensor product, $\cW_{3,2} \otimes_{\min} \cA$ is completely order isomorphic to its inclusion in $C_u^*(\cW_{3,2}) \otimes_{\min} \cA$. On the other hand, the operator system structure on $\cW_{3,2} \otimes \cA$ inside of $C_u^*(\cW_{3,2}) \otimes_{\max} \cA$ is the commuting tensor product $\cW_{3,2} \otimes_c \cA$ \cite[Proposition~3.4]{Ka14}, so this must equal the minimal tensor product. But all unital $C^*$-algebras are $(c,\max)$-nuclear \cite[Theorem~6.7]{KPTT11}, so we must have
\[ \cW_{3,2} \otimes_{\min} \cA=\cW_{3,2} \otimes_c \cA= \cW_{3,2} \otimes_{\max} \cA.\]
By Kavruk's theorem \cite[Theorem~0.3]{Ka15}, $\cA$ is nuclear.
\end{proof}

Now we get to the main result of this section:

\begin{theorem}
\label{theorem: three-dimensional nuclearity detector}
There is an $n \in \bN$ with $n \leq 6$ and a three-dimensional operator system $\cS \subseteq M_{n+2}(C_u^*(\cW_{3,2}))$ with the following properties:
\begin{itemize}
\item $\cS$ is hyperrigid in $M_{n+2}(C_u^*(\cW_{3,2}))$.
\item $\cS$ does not have the LP.
\item $\cS$ is not exact.
\item $\cS$ is a nuclearity detector.
\end{itemize}
\end{theorem}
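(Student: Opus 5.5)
We will apply Theorem \ref{theorem: three dimensional no LP no exactness} with $\cT=\cW_{3,2}$. Since $\dim(\cW_{3,2})=4$, that theorem gives $n\leq 2(4-1)=6$ and a three-dimensional $\cS\subseteq M_{n+2}(C_u^*(\cW_{3,2}))$ that is hyperrigid, fails the LP and is not exact. The hypothesis is met because $\cW_{3,2}$ does not have the LP: its dual $NC(3)$ is not exact (see the Remark after Corollary \ref{corollary: non LLP yields SWP counterexample} and \cite{Ka14}). So the first three bullet points follow at once, and the real work is the nuclearity detector property.

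Write $\cB=C_u^*(\cW_{3,2})$. Let $\cA$ be a unital $C^*$-algebra with $\cS\otimes_{\min}\cA=\cS\otimes_{\max}\cA$; we want to show $\cA$ is nuclear. The plan is to transfer this equality up to $M_{n+2}(\cB)\otimes\cA$ by hyperrigidity and $C^*$-envelopes, following the proof of Proposition \ref{proposition: hyperrigidity passes exactness and LLP}.

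\begin{itemize}
\item By \cite[Proposition~6.2]{HK19} and \cite[Theorem~3.10]{HK19}, $\cS\otimes_{\min}\cA$ is hyperrigid in $M_{n+2}(\cB)\otimes_{\min}\cA$, so $C^*_{\env}(\cS\otimes_{\min}\cA)=M_{n+2}(\cB)\otimes_{\min}\cA$.
\item On the max side, the structure $\cS$ inherits inside $M_{n+2}(\cB)\otimes_{\max}\cA$ is $\cS\otimes_c\cA$ \cite[Proposition~3.4]{Ka14}. Since $\cS\otimes_{\min}\cA=\cS\otimes_{\max}\cA$ forces $\cS\otimes_{\min}\cA=\cS\otimes_c\cA$, the min structure is realized inside $M_{n+2}(\cB)\otimes_{\max}\cA$, and this copy generates that $C^*$-algebra.
\item Next show $\cS\otimes_c\cA$ is hyperrigid in $M_{n+2}(\cB)\otimes_{\max}\cA$, copying the multiplicative-domain argument of Proposition \ref{proposition: hyperrigidity passing to el and er}. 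Given a unital $*$-homomorphism $\pi$ of the max tensor product and a ucp extension $\Phi$ of $\pi$ restricted to $\cS\otimes\cA$, the restriction of $\Phi$ to $M_{n+2}(\cB)\otimes 1$ extends $\pi$ on $\cS\otimes 1$, so it agrees with $\pi$ by hyperrigidity of $\cS$. Also $1\otimes\cA$ lies in $\cS\otimes\cA$, so $\Phi=\pi$ there. Both subalgebras are then in the multiplicative domain of $\Phi$, hence $\Phi=\pi$.
\item By \cite[Theorem~3.10]{HK19}, $C^*_{\env}(\cS\otimes_c\cA)=M_{n+2}(\cB)\otimes_{\max}\cA$.
\end{itemize}

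Since $\cS\otimes_{\min}\cA=\cS\otimes_c\cA$ as operator systems, their $C^*$-envelopes coincide, and the identifications are compatible with the canonical maps. Therefore $M_{n+2}(\cB)\otimes_{\min}\cA=M_{n+2}(\cB)\otimes_{\max}\cA$. Compressing by the corner $e_{1,1}$ (a complete order embedding of $\cB\otimes\cA$ into both sides) gives $\cB\otimes_{\min}\cA=\cB\otimes_{\max}\cA$. Proposition \ref{proposition: universal of w32 is a nuclearity detector} then shows $\cA$ is nuclear. The converse direction is automatic, as noted at the start of the section.

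The main obstacle is the envelope-matching step. One must check carefully that the max $C^*$-tensor product really is the $C^*$-envelope of $\cS\otimes_c\cA$. This needs \cite[Theorem~3.10]{HK19} to apply to a generating hyperrigid subsystem of the max product, which in turn needs the generating statement from the second item. One must also check that the unital complete order isomorphism of the two tensor structures extends to the canonical quotient between the two $C^*$-algebras, so that this quotient is injective.
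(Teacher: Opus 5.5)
Your outline follows the paper's proof. You get the first three bullets from Theorem~\ref{theorem: three dimensional no LP no exactness} with $\dim(\cW_{3,2})=4$. You then transfer $\cS\otimes_{\min}\cA=\cS\otimes_{\max}\cA$ up to $M_{n+2}(\cB)\otimes_{\min}\cA=M_{n+2}(\cB)\otimes_{\max}\cA$ via hyperrigidity and $C^*$-envelopes, cut down to a corner, and invoke Proposition~\ref{proposition: universal of w32 is a nuclearity detector}. There is, however, one incorrect step.

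In your second item you assert, citing \cite[Proposition~3.4]{Ka14}, that the structure $\cS\otimes\cA$ inherits from $M_{n+2}(\cB)\otimes_{\max}\cA$ is $\cS\otimes_c\cA$. That result describes the structure inherited from $C_u^*(\cS)\otimes_{\max}\cA$. It needs the \emph{universal} $C^*$-algebra of the operator system, which is how the paper uses it for $\cW_{3,2}\subseteq C_u^*(\cW_{3,2})$. But $M_{n+2}(\cB)$ is not $C_u^*(\cS)$; by hyperrigidity it is $C^*_{\env}(\cS)$. The inherited structure is therefore, by definition, $\cS\otimes_{\ess}\cA$.

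These two structures differ in general. Every unital $C^*$-algebra is $(c,\max)$-nuclear \cite[Theorem~6.7]{KPTT11}, so $\cS\otimes_c\cA=\cS\otimes_{\max}\cA$. For $\cW_{3,2}$, whose $C^*$-envelope is nuclear, one has $\cW_{3,2}\otimes_{\ess}\cA=\cW_{3,2}\otimes_{\min}\cA$. By Kavruk's theorem these differ for every non-nuclear $\cA$. So your identification is not justified by the citation and fails for general operator systems. Under your hypothesis $\min=\max$ all of $\min,\ess,c,\max$ coincide on $\cS\otimes\cA$, so your conclusion happens to be true, but not for the reason you give.

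The repair is immediate and gives exactly the paper's argument:
\begin{itemize}
\item Since $\ess$ is an operator system tensor product, it lies between $\min$ and $\max$. The hypothesis then gives $\cS\otimes_{\min}\cA=\cS\otimes_{\ess}\cA$ directly.
\item Your multiplicative-domain argument in the third item never uses the label $c$. What it actually proves is that the inherited copy, i.e.\ $\cS\otimes_{\ess}\cA$, is hyperrigid in $M_{n+2}(\cB)\otimes_{\max}\cA$. This is precisely what the paper extracts from the proof of \cite[Proposition~6.2]{HK19}.
\end{itemize}

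With $c$ replaced by $\ess$ in your second and fourth items, the rest goes through as you wrote it. For the envelope matching: the kernel of the canonical quotient $M_{n+2}(\cB)\otimes_{\max}\cA\to M_{n+2}(\cB)\otimes_{\min}\cA$ is a boundary ideal for a hyperrigid subsystem. That subsystem's Shilov ideal is zero, so the kernel is zero. The corner compression and the final appeal to Proposition~\ref{proposition: universal of w32 is a nuclearity detector} are then correct. Your explicit check that $\cW_{3,2}$ fails the LP is a welcome detail that the paper leaves implicit.
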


\begin{proof}
Noting that $\dim(\cW_{3,2})=4$, the first three points about $\cS$ and the bound on $n$ are due to Theorem \ref{theorem: three dimensional no LP no exactness}. Thus, we need only show that this operator system $\cS$ is also a nuclearity detector. Let $\cA$ be any unital $C^*$-algebra. As $\cS$ is hyperrigid in $M_{n+2}(C_u^*(\cW_{3,2}))$, the proof of \cite[Proposition~6.2]{HK19} shows that $\cS \otimes_{\min} \cA \subseteq M_{n+2}(C_u^*(\cW_{3,2})) \otimes_{\min} \cA$ is hyperrigid and $\cS \otimes_{\ess} \cA \subseteq M_{n+2}(C_u^*(\cW_{3,2})) \otimes_{\max} \cA$ is hyperrigid, where for two operator systems $\cR$ and $\cT$, the tensor product $\cR \otimes_{\ess} \cT$ is the operator system structure arising from the inclusion $\cR \otimes \cT \subseteq C_{\env}^*(\cR) \otimes_{\max} C_{\env}^*(\cT)$. (Here, we are using $\cR=\cS$ and $\cT=\cA$.)

Thus, if $\cA$ is a unital $C^*$-algebra with $\cS \otimes_{\min} \cA=\cS \otimes_{\max} \cA$, then since every operator system tensor product is between $\min$ and $\max$ \cite{KPTT11}, we have 
\[ \cS \otimes_{\min} \cA=\cS \otimes_{\ess} \cA.\]
Passing to $C^*$-envelopes by hyperrigidity (\cite[Proposition~6.2]{HK19}) forces 
\[M_{n+2}(C_u^*(\cW_{3,2})) \otimes_{\min} \cA=M_{n+2}(C_u^*(\cW_{3,2})) \otimes_{\max} \cA.\] 
This readily implies that 
\[C_u^*(\cW_{3,2}) \otimes_{\min} \cA=C_u^*(\cW_{3,2}) \otimes_{\max} \cA,\] 
which implies that $\cA$ is nuclear by Proposition \ref{proposition: universal of w32 is a nuclearity detector}. We conclude that $\cS$ is a nuclearity detector.
\end{proof}

We note that Kavruk's nuclearity detector $\cW_{3,2}$ does not have LP, but it is exact since it is a subsystem of $M_6$ (which is nuclear, hence exact). So, at the expense of explicitness, Theorem \ref{theorem: three-dimensional nuclearity detector} provides a nuclearity detector of the smallest dimension possible (since all $1$ and $2$-dimensional operator systems are $C^*$-nuclear) that is also not exact.

While one could work out a generating set of at most $6$ unitaries for $C_u^*(\cW_{3,2})$ and hence a three-dimensional operator system $\cS \subseteq M_8(C_u^*(\cW_{3,2}))$ with the properties of Theorem \ref{theorem: three-dimensional nuclearity detector}, we have not done so here. We do not know whether there is a more explicit three-dimensional nuclearity detector (in particular, one that does not rely on universal $C^*$-algebras), though we suspect that one can be found.

We note that all nuclearity detectors exhibited in the work of Kavruk \cite{Ka15} are actually duals of WEP detectors for unital $C^*$-algebras. We are not sure if this holds for the three-dimensional operator system $\cS$ exhibited in Theorem \ref{theorem: three-dimensional nuclearity detector}.

\begin{problem}
\label{problem: dual of WEP detector}
Let $\cS$ be the three-dimensional nuclearity detector from Theorem \ref{theorem: three-dimensional nuclearity detector}. Is $\cS^d$ a WEP detector? That is to say, is it true that a unital $C^*$-algebra $\cA$ has the weak expectation property if and only if $\cS^d \otimes_{\min} \cA=\cS^d \otimes_{\max} \cA$?
\end{problem}

If the answer to Problem \ref{problem: dual of WEP detector} is yes, then there is a three-dimensional operator system $\cS$ (along with its dual) whose tensor products tell us that both the Smith-Ward problem and Connes' embedding problem have a negative answer. Indeed, the Smith-Ward problem having a negative answer is summarized in the fact that $\cS$ does not have LP, but this is equivalent to having $\cS \otimes_{\min} \bofh \neq \cS \otimes_{\max} \bofh$ where $\cH$ is infinite-dimensional and separable by \cite[Theorem~8.1]{KPTT13}. But by Theorem \ref{theorem: three-dimensional nuclearity detector}, the fact that $\cS \otimes \bofh$ does not have a unique operator system tensor product follows from the well-known fact that $\bofh$ is not nuclear!

Similarly, if $\cS^d$ is a WEP detector, then applying this to $C^*(\bF_2)$ (or any $C^*$-algebra in Kirchberg's reformulation of Connes' embedding problem \cite{Ki93}), the Connes embedding problem would have a positive answer if and only if $\cS^d \otimes_{\min} C^*(\bF_2)=\cS^d \otimes_{\max} C^*(\bF_2)$. Thanks to the landmark result that $\text{MIP}^*=\text{RE}$, we know that the Connes embedding problem has a negative answer \cite{JNVWY20}, and so $C^*(\bF_2)$ does not have WEP, which would force $\cS^d \otimes_{\min} C^*(\bF_2) \neq \cS^d \otimes_{\max} C^*(\bF_2)$. So it is possible that the negative solutions to both the Smith-Ward problem and Connes' embedding problem are wrapped up in the tensor products of this three-dimensional operator system $\cS$ (and its dual).

On the other hand, if the answer to Problem \ref{problem: dual of WEP detector} is no, then it would provide, to our knowledge, the first example of a finite-dimensional operator system that detects nuclearity but whose dual does not detect the WEP.

\section*{Acknowledgements}

We would like to thank David Blecher for helpful feedback on an earlier draft of this paper and for pointing us to Proposition \ref{proposition: one and two dimensional systems}. We would also like to thank Vern Paulsen and Mizanur Rahaman for helpful conversations and suggestions.

\end{document}